\newtheorem{dfn}{Definition }[section]
\newtheorem{thm}[dfn]{Theorem}
\newlength{\fixboxwidth}
\newtheorem{cor}[dfn]{Corollary}
\newtheorem{lem}[dfn]{Lemma}
\theoremstyle{remark}
\newtheorem{rem}[dfn]{Remark}
\newcommand{\mgk}{\mathcal{W}^\alpha_{\alpha_1,\alpha_2}(\Rn)}
\newcommand{\cpu}{c_\infty(1/p,1/u)}
\newcommand{\R}{{\mathbb{R}}}
\newcommand{\Rn}{{\mathbb{R}^n}}
\newcommand{\N}{\mathbb{N}}
\newcommand{\Z}{\mathbb{Z}}
\newcommand{\Zn}{{\mathbb{Z}^n}}
\newcommand{\calP}{\mathcal{P}(\Rn)}
\newcommand{\Plog}{\mathcal{P}^{\log}(\Rn)}
\newcommand{\q}{{q(\cdot)}}
\newcommand{\p}{{p(\cdot)}}
\renewcommand{\u}{{u(\cdot)}}
\newcommand{\SSn}{\mathcal{S}'(\Rn)}
\newcommand{\Sn}{\mathcal{S}(\Rn)}
\newcommand{\Lp}{L_\p(\Rn)}
\newcommand{\Mup}{M_{p(\cdot)}^{u(\cdot)}(\Rn)}
\newcommand{\Muplq}{M_{p(\cdot)}^{u(\cdot)}(\ell_\q)}
\newcommand{\vek}[1]{\boldsymbol{#1}}
\newcommand{\norm}[2]{\left\|\left.{#1}\right|{#2}\right\|}
\newcommand{\Mufwpqx}{\mathcal{E}^{\vek{w},u(\cdot)}_{\p,\q}(\Rn)}
\newcommand{\supp}{\operatorname{supp}}
\newcommand{\mufwpqx}{\mathit{e}^{\vek{w},u(\cdot)}_{\p,\q}}
\newcommand{\mubwpinfx}{\mathit{n}^{\vek{w},u(\cdot)}_{\p,\infty}}
\newcommand{\num}{{\nu,m}}
\begin{document}

\title[Decompositions with atoms and molecules]{Decompositions with atoms and molecules for variable exponent Triebel-Lizorkin-Morrey spaces}


\author[A. Caetano]{Ant\'{o}nio Caetano}
\address{Center for R\&D in Mathematics and Applications, Department of Mathematics, University of Aveiro, 3810-193 Aveiro, Portugal}
\email{acaetano@ua.pt}

\author[H. Kempka]{Henning Kempka}
\address{Department of Fundamental Sciences, PF 100314, University of Applied Sciences Jena, 07703 Jena, Germany}
\email{henning.kempka@eah-jena.de}

\thanks{This research was partially supported by the project ``\emph{Smoothness Morrey spaces with variable exponents}'' approved under the agreement `Projektbezogener Personenaustausch mit Portugal -- A{\c{c}}{\~o}es Integradas Luso-Alem{\~a}s' / DAAD-CRUP. It was also supported through CIDMA (Center for Research and Development in Mathematics and Applications) and FCT (Foundation for Science and Technology) within project UID/MAT/04106/2019.}
\thanks{\copyright 2019. Licensed under the CC-BY-NC-ND 4.0 license http://creativecommons.org/licenses/by-nc-nd/4.0/}

\date{\today}

\subjclass[2010]{46E35, 46E30, 42B25}

\keywords{Variable exponents, Triebel-Lizorkin-Morrey spaces, atomic characterization, molecular characterization}

\begin{abstract}
 We continue the study of the variable exponent Morreyfied Triebel-Lizorkin spaces introduced in a previous paper. Here we give characterizations by means of atoms and molecules. We also show that in some cases the number of zero moments needed for molecules, in order that an infinite linear combination of them (with coefficients in a natural sequence space) converges in the space of tempered distributions, is much smaller than what is usually required. We also establish a Sobolev type theorem for related sequence spaces, which might have independent interest.
\end{abstract}

\maketitle

\section{Introduction}
This paper is a continuation of \cite{CK18}, where we have introduced and presented some properties of the Triebel-Lizorkin-Morrey spaces  ${\mathcal E}^{s(\cdot),\u}_{\p,\q}(\Rn)$, which mix two recent trends in the literature, in this case starting from the Triebel-Lizorkin spaces $F^s_{p,q}(\Rn)$:
\renewcommand{\theenumi}{\Roman{enumi}}%
\begin{enumerate}
	\item on one hand, one \emph{Morreyfies} them in some way, that is, replace the $L_p(\Rn)$ spaces in their construction by Morrey spaces $M^u_p(\Rn)$;
	\item on the other hand, one makes the parameters $s$, $p$, $q$ and $u$ variable.
\end{enumerate}

In \cite{CK18} we have traced a little bit of the history of these trends, which we will not repeat here. We just add that somewhat close to our intentions is the work \cite{YYZ15} and, more recently, \cite{WYYZ18}, where so-called Triebel-Lizorkin-type spaces with variable exponents are studied. In the constant exponents setting --- to which we refer the interested reader to the surveys \cite{Sic12} and \cite{Sic13}, and also to \cite{yuan2010decompositions} and \cite{NNS16} --- these scales include the Triebel-Lizorkin-Morrey spaces. However, in the variable exponents setting that is true only under severe restrictions on the parameters (in particular, \eqref{eq:Horest} below), as we have pointed out in \cite{CK18}.

In \cite{CK18} we have introduced the Triebel-Lizorkin-Morrey spaces  ${\mathcal E}^{s(\cdot),\u}_{\p,\q}(\Rn)$ and proved there an important convolution inequality, which, in particular, allowed us to show that they satisfy a Peetre maximal function characterization, and afterwards concluded the independence of the introduced spaces from the admissible system used.

As we have stressed in \cite{CK18}, one important feature of our approach is that we do not need to make as many restrictions to the parameters as we have seen in approaches by other authors. In particular, there is no need in our approach for the commonly seen restriction

\begin{equation}
\sup_{x \in \Rn}\Big( \frac{1}{p(x)}-\frac{1}{u(x)}\Big) < \frac{1}{\sup p}.
\label{eq:Horest}
\end{equation}

\noindent Moreover, we actually considered 2-microlocal versions $\Mufwpqx$ of the spaces, where the variable smoothness parameter $2^{js(x)}$ is replaced by the more general admissible weights $w_j(x)$.

Continuing from the study made in \cite{CK18}, in the present paper we prove the atomic and molecular characterizations of those spaces given together by Theorems \ref{1st_direction} and \ref{thm:416}. The latter is proved after providing several crucial results, in particular Theorem \ref{convergenceS'}, to which we would also like to draw the attention here. The reason is that it gives a result which is new even if one reduces it to the constant exponent case. It gives sufficient conditions for an infinite linear combination of so-called $[K,L,M]$-molecules to converge in $\SSn$. In connection with constant exponent Triebel-Lizorkin-Morrey spaces ${\mathcal E}^{s,u}_{p,q}(\Rn)$, the usual condition imposed on $L$ (which controls the number of zero moments of the molecules) is
\begin{equation}
L>\sigma_p-s
\label{eq:Lclassical}
\end{equation}
(see, e.g., \cite[Lemma 2.32]{Rosenthal}), where $\sigma_p:=n(1/\min\{1,p\}-1)$. However, reading our Theorem \ref{convergenceS'} for such spaces, a different condition is imposed:
\begin{equation}
L>\frac{n}{u}-s.
\label{eq:Lnew}
\end{equation}
We recover condition \eqref{eq:Lclassical} in our Theorem \ref{convergenceS'B} under the extra assumption $1-p/u<p$, which is really only an extra restriction when $p \leq 1$. So, it is natural to ask which one is weaker: \eqref{eq:Lclassical} or \eqref{eq:Lnew}? As it is easily seen that $1-p/u\leq p$ if and only if $\sigma_p \leq n/u$, we conclude that our new sufficient condition \eqref{eq:Lnew} is weaker if and only if $1-p/u>p$. This holds if and only if the distance between $1/p$ and $1/u$ is greater than 1, since we are also assuming $p\leq u$ here.

On the other hand, we would also like to draw the reader's attention to the fact that the proof of the counterpart of Theorem \ref{convergenceS'} for the variable version of condition \eqref{eq:Lclassical}, established in Theorem \ref{convergenceS'B}, partly relies on a result which might have independent interest, namely a Sobolev type embedding theorem given in Lemma \ref{lem:sobolev} for corresponding sequence spaces.

\section{Preliminaries}
\subsection{General notation}
Here we introduce some of the general notation we use throughout the paper. $\N$, $\N_0$, $\Z$, $\R$ and $\mathbb C$ have the usual meaning as sets of numbers, as well as their $n$-th powers for $n \in \N$. The Euclidean norm in $\Rn$ is denoted by $|\cdot|$, though this notation is also used for the norm of a multi-index, and for Lebesgue measure when it is being applied to (measurable) subsets of $\Rn$. By $\lfloor \cdot \rfloor$ and $\lceil \cdot \rceil$ we mean the usual floor and ceiling functions respectively.

The symbol $\Sn$ stands for the usual Schwartz space of infinitely differentiable rapidly decreasing complex-valued functions on $\Rn$. We take for the (semi)norms generating its locally convex topology the functionals $\mathfrak{p}_N$, for $N\in\N$, defined by
\begin{align*}
\mathfrak{p}_N(\phi):=\sup_{x\in\Rn}(1+|x|)^N\sum_{|\beta|\leq N}|D^\beta\phi(x)|, \quad \phi \in \Sn.
\end{align*}
By { }$\hat{\phi}$ we denote the Fourier transform of $\phi \in \Sn$ in the version
$$ \hat{\phi}(x):=\frac{1}{(2\pi)^{n/2}}\int_{\Rn} e^{-ix\cdot\xi} \phi(\xi)\, d\xi, \quad x \in \Rn,$$
and by ${\phi}^\vee$ we denote the inverse Fourier transform of $\phi$. These transforms are topological isomorphisms in $\Sn$ which extend in the usual way to the space $\SSn$ of tempered distributions, the dual space of $\Sn$, which we endow with the weak topology.

For two complex or extended real-valued measurable functions $f,g$ on $\Rn$ the convolution $f*g$ is given, in the usual way, by
\begin{align*}
(f\ast g)(x):=\int_\Rn f(x-y)g(y)dy, \quad x \in \Rn,
\end{align*}
whenever it makes sense (a.e.).

By $c,c_1,c_\phi,...>0$ we denote constants which may change their value from one line to another. Further, $f\lesssim g$ means that there exists a constant $c>0$ such that $f\leq cg$ holds for a set of variables on which $f$ and $g$ may depend on and which shall be clear from the context. If we write $f\approx g$ then there exists constants $c_1,c_2>0$ with $c_1f\leq g\leq c_2f$. And we shall then say that the expressions $f$ and $g$ are equivalent (across the considered set of variables).

By $Q_{\nu,m}\subset \Rn$, with $\nu \in \Z$ and $m \in \Z^n$, we denote the dyadic closed cube in $\Rn$ which is centered at $2^{-\nu}m$ and has sides parallel to the axes and of length $2^{-\nu}$. Given $d>0$, $dQ_{\nu,m}$ stands for the cube concentric with $Q_{\nu,m}$ and with side length $d2^{-\nu}$. Furthermore, we denote by $B_r(x) \subset \Rn$ the open ball in $\Rn$ with center $x\in\Rn$ and radius $r>0$, and by $Q_r(x) \subset \Rn$ the open cube in $\Rn$ with center $x\in\Rn$ and sides parallel to the axes and of length $2r>0$.\\
The characteristic function $\chi_{\nu.m}$ of a cube $Q_{\nu,m}$ is, as usual, given by
\begin{align*}
\chi_{\nu,m}(x):=\begin{cases}
1,\ &\text{for }x\in Q_{\nu,m}\\
0,&\text{for }x\notin Q_{\nu,m}
\end{cases}.
\end{align*}

The characteristic function $\chi_A$ of any other subset $A$ of $\Rn$ is defined in an analogous way.

Given topological vector spaces $A$ and $B$, the notation $A \hookrightarrow B$ will be used to mean that the space $A$ is continuously embedded into the space $B$.

Finally, the following standard shortcuts are used for $r,s\in(0,\infty]$:
$$\sigma_r := n\left(\frac{1}{\min\{1,r\}}-1\right) \quad \mbox{ and } \quad \sigma_{r,s} := n\left(\frac{1}{\min\{1,r,s\}}-1\right).$$

\subsection{Variable exponent Lebesgue spaces}
The set of variable exponents $\mathcal{P}(\Rn)$ is the collection of all measurable functions $p:\Rn\to(0,\infty]$ with $p^-:=\operatornamewithlimits{ess-inf}_{x\in\Rn}p(x)>0$. Further, we set $p^+:=\operatornamewithlimits{ess-sup}_{x\in\Rn}p(x)$.
For exponents with $p(x)\geq1$ and complex or extended real-valued measurable functions $f$ on $\R^n$ a semi-modular is defined by
\begin{equation*}
\varrho_\p(f):=\int_\Rn \phi_{p(x)}(|f(x)|)\,dx,
\end{equation*}
where
$$
\phi_{p(x)}(t) :=
\begin{cases}
t^{p(x)} & \text{ if } p(x)\in (0,\infty), \\
0 & \text{ if } p(x)=\infty \text{ and } t\in [0,1], \\
\infty & \text{ if } p(x)=\infty \text{ and } t\in(1,\infty], \\
\end{cases}
$$
and the variable exponent Lebesgue space $\Lp$ is given by
$$\Lp:=\{f: \text{ there exists a $\lambda>0$ with }\varrho_\p\left(f/\lambda\right)<\infty\},$$
with their elements being taken in the usual sense of equivalence classes of a.e. coincident functions.
This space is complete and normed, hence a Banach space, with the norm
\begin{align*}
\norm{f}{\Lp}:=\inf\{\lambda>0:\varrho_\p(f/\lambda)\leq 1\}.
\end{align*}
These spaces share many properties with the usual Lebesgue spaces, see for a wide overview \cite{Kovacik}, \cite{DHHR}, \cite{CruzUribe}, but there are also some differences, e.g. they are not translation invariant. By the property 
\begin{equation}\label{eq:Lp/t}
\norm{f}{\Lp}=\norm{|f|^t}{L_{\frac\p t}(\Rn)}^{1/t}\quad\text{for any $t>0$}
\end{equation}
it is also possible to extend the definition of the spaces $\Lp$ to all exponents $p\in\mathcal{P}(\Rn)$. In such more general setting the functional $\norm{\cdot\,}{\Lp}$ need not be a norm, although it is always a quasi-norm.\\
Many theorems for variable Lebesgue spaces $\Lp$ are only valid for exponents $\p$ within a subclass of $\mathcal{P}(\Rn)$ where they satisfy certain regularity conditions. An appropriate subclass in this sense is the set $\Plog$ defined below.
\begin{dfn}
 Let $g:\Rn\to\R$.
 \begin{itemize}
  \item[(i)] We say that $g$ is locally $\log$ H\"older continuous, $g\in C^{\log}_{{\rm loc}}(\Rn)$, if there exists a constant $c_{\log}(g) > 0$ with
  \begin{align*}
   |g(x)-g(y)|\leq\frac{c_{\log}(g)}{\log(e+\frac{1}{|x-y|})}\quad\text{for all }x,y\in\Rn.
  \end{align*}
  \item[(ii)] We say that $g$ is globally $\log$ H\"older continuous, $g\in C^{\log}$, if it is locally $\log$ H\"older continuous and there exist a $g_\infty\in \R$ and a constant $c_\infty(g) > 0$ with
  \begin{align}\label{dfn:cinfty}
   |g(x)-g_\infty|\leq\frac{c_\infty(g)}{\log(e+|x|)}\quad\text{for all }x\in\Rn.
  \end{align}
  \item[(iii)] We write $g\in\Plog$ if $0<g^-\leq g(x)\leq g^+\leq\infty$ with $1/g\in C^{\log}(\Rn)$.
 \end{itemize}

\end{dfn}
Since a control of the quasi-norms of characteristic functions of balls in variable exponent spaces will be crucial for our estimates, we present below a result in that direction and which is an adapted version of \cite[Corollary 4.5.9]{DHHR} to the case $0<p^-\leq p^+\leq\infty$. To obtain it one just has to explore property \eqref{eq:Lp/t} above.
\begin{lem}\label{lem:xinorm}
 Let $p\in\Plog$. Then for all $x_0\in\R^n$ and all $r>0$ we have that 
 \begin{align*}
  \norm{\chi_{B_r(x_0)}}{L_{p(\cdot)}(\Rn)}&\approx \norm{\chi_{Q_r(x_0)}}{L_{p(\cdot)}(\Rn)}\\
  &\approx \begin{cases}
            r^{\frac{n}{p(x)}}\quad&,\;\text{if }r\leq 1\text{ and }x\in B_r(x_0)\\
            r^{\frac{n}{p_\infty}}&,\;\text{if }r\geq1
           \end{cases}.
 \end{align*}
Here we denote $\frac1{p_\infty}:=\Big(\frac1{p}\Big)_\infty$ which is given by \eqref{dfn:cinfty}.
\end{lem}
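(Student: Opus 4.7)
The plan is to reduce the claim to the known version for exponents bounded below by $1$ via the rescaling identity \eqref{eq:Lp/t}, and then pass from balls to cubes by an elementary sandwich argument. I will assume the classical result \cite[Corollary 4.5.9]{DHHR}, namely: if $q\in\Plog$ with $q^-\geq 1$, then for all $x_0\in\Rn$ and $r>0$,
\begin{equation*}
\norm{\chi_{B_r(x_0)}}{L_q(\Rn)} \approx
\begin{cases}
r^{n/q(x)}, & r\leq 1,\ x\in B_r(x_0),\\
r^{n/q_\infty}, & r\geq 1,
\end{cases}
\end{equation*}
with constants depending only on $n$, $q^-$, $q^+$, $c_{\log}(1/q)$ and $c_\infty(1/q)$.

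The first step is to pick any fixed $t\in(0,p^-)$. Then the rescaled exponent $p(\cdot)/t$ satisfies $(p/t)^-=p^-/t>1$, and since $1/(p/t)=t\cdot (1/p)$, the log-Hölder conditions on $1/p$ transfer (with constants multiplied by $t$) to those on $1/(p/t)$; in particular $(p/t)_\infty=p_\infty/t$. Hence $p(\cdot)/t\in\Plog$ with $(p/t)^-\geq 1$, and the classical result applies to it.

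The second step applies \eqref{eq:Lp/t} to $f=\chi_{B_r(x_0)}$, for which $|f|^t=\chi_{B_r(x_0)}$, to obtain
\begin{equation*}
\norm{\chi_{B_r(x_0)}}{\Lp} = \norm{\chi_{B_r(x_0)}}{L_{p(\cdot)/t}(\Rn)}^{1/t}.
\end{equation*}
Plugging in the classical estimates for $L_{p(\cdot)/t}$: when $r\leq 1$ and $x\in B_r(x_0)$ the right-hand side is comparable to $\bigl(r^{n/(p(x)/t)}\bigr)^{1/t}=r^{n/p(x)}$; when $r\geq 1$ it is comparable to $\bigl(r^{n/(p/t)_\infty}\bigr)^{1/t}=r^{n/p_\infty}$. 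This gives the stated ball estimates.

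Finally, the ball-cube equivalence is obtained from the geometric inclusions $B_r(x_0)\subseteq Q_r(x_0)\subseteq B_{r\sqrt{n}}(x_0)$ (recall $Q_r(x_0)$ has side length $2r$), which give pointwise $\chi_{B_r(x_0)}\leq \chi_{Q_r(x_0)}\leq \chi_{B_{r\sqrt{n}}(x_0)}$; monotonicity of the quasi-norm, together with the already-proved ball estimates applied at radii $r$ and $r\sqrt{n}$, yields $\norm{\chi_{Q_r(x_0)}}{\Lp}\approx\norm{\chi_{B_r(x_0)}}{\Lp}$, with constants depending only on $n$ and on the log-Hölder data of $p$. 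I expect no serious obstacle here: the only mildly delicate point is checking that the pointwise estimate for $r\leq 1$ does not degenerate when comparing $B_r$ and $B_{r\sqrt{n}}$, but this is handled by the local log-Hölder condition, since the ratio $r^{n/p(x)}/(r\sqrt{n})^{n/p(y)}$ for $x\in B_r(x_0)$ and $y\in B_{r\sqrt{n}}(x_0)$ is bounded above and below by constants depending only on $c_{\log}(1/p)$ and $n$.
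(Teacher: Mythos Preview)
Your proposal is correct and follows exactly the approach the paper indicates: the paper merely states that the result is ``an adapted version of \cite[Corollary 4.5.9]{DHHR} to the case $0<p^-\leq p^+\leq\infty$'' obtained by exploring property \eqref{eq:Lp/t}, and you carry this out in detail by rescaling with $t\in(0,p^-)$ and then handling the ball--cube comparison via the obvious inclusions.
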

\section{Variable exponent Morrey spaces}

Now, we can define the Morrey spaces which we are interested in, and which were introduced in \cite{AHS08} (see also the beginning of Section 2.3 in \cite{AC18} for a small survey of literature on variable exponent Morrey spaces).
\begin{dfn}\label{dfn:Morrey}Let $p,u\in\calP$ with $p\leq u$. Then the Morrey space $\Mup$ is the collection of all (complex or extended real-valued) measurable functions $f$ on $\Rn$ with (quasi-norm given by)
\begin{align*}
 \norm{f}{\Mup}:=\sup_{x\in\Rn,r>0}r^{n\left(\frac1{u(x)}-\frac1{p(x)}\right)}\norm{f}{L_{p(\cdot)}(B_r(x))}<\infty.
\end{align*} 
\end{dfn}

For future reference we state and prove the following result giving easy examples of functions belonging to the above Morrey spaces, as long as $p$ satisfies a convenient regularity property.
\begin{lem}\label{lem:Anorm}
Let $u\in\calP$, $p\in\Plog$, $p\leq u$ and $A$ be a measurable and bounded subset of $\Rn$. Then $\chi_A\in\Mup$.
\end{lem}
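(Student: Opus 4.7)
The plan is to reduce the estimate of $\norm{\chi_A}{\Mup}$ to estimates for characteristic functions of balls, and then apply Lemma~\ref{lem:xinorm}. Since $A$ is bounded, I would first pick $R\geq 1$ such that $A\subset B_R(0)$, which gives $\chi_A\leq \chi_{B_R(0)}$. Combined with the trivial inequality $\chi_A\leq 1$, this yields the two complementary bounds
\[
\norm{\chi_A}{L_{p(\cdot)}(B_r(x))}\leq \norm{\chi_{B_r(x)}}{L_{p(\cdot)}(\Rn)}\quad\text{and}\quad \norm{\chi_A}{L_{p(\cdot)}(B_r(x))}\leq \norm{\chi_{B_R(0)}}{L_{p(\cdot)}(\Rn)},
\]
the first being sharp for small $r$ and the second for large $r$.

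Next, I would split the supremum appearing in the definition of $\norm{\chi_A}{\Mup}$ into the regions $r\leq 1$ and $r>1$. For $r\leq 1$, Lemma~\ref{lem:xinorm} applied with $x_0=x$ (so that $x\in B_r(x_0)$) gives $\norm{\chi_{B_r(x)}}{L_{p(\cdot)}(\Rn)}\approx r^{n/p(x)}$. Multiplication by the Morrey weight $r^{n(1/u(x)-1/p(x))}$ telescopes the exponents to $r^{n/u(x)}$, which is bounded by $1$ since $u>0$ (with the convention $r^{0}=1$ when $u(x)=\infty$) and $r\leq 1$. For $r>1$, Lemma~\ref{lem:xinorm} bounds $\norm{\chi_{B_R(0)}}{L_{p(\cdot)}(\Rn)}$ by a finite constant depending only on $R$ and $p$, while the hypothesis $p\leq u$ ensures that $1/u(x)-1/p(x)\leq 0$, so the Morrey weight is itself at most $1$. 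Taking the supremum over both regimes then gives the desired finite bound on $\norm{\chi_A}{\Mup}$.

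There is no serious obstacle here; the argument is essentially bookkeeping. The only nontrivial ingredient is Lemma~\ref{lem:xinorm}, which has to be invoked with different inputs in the two regimes (a shrinking ball $B_r(x)$ for $r\leq 1$ versus the fixed ball $B_R(0)$ for $r>1$). The $\log$-Hölder regularity of $p$ enters only through that lemma, while the hypothesis $p\leq u$ is precisely what is needed to make the large-$r$ case work by forcing the Morrey weight to be $\leq 1$.
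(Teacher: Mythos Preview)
Your proposal is correct and follows essentially the same approach as the paper: split into $r\leq 1$ and $r>1$, use $\chi_A\leq \chi_{B_r(x)}$ together with Lemma~\ref{lem:xinorm} in the first regime to produce $r^{n/u(x)}\leq 1$, and in the second regime use $p\leq u$ to bound the Morrey weight by $1$ while controlling $\norm{\chi_A}{L_{p(\cdot)}(\Rn)}$ via $\chi_A\leq\chi_{B_R(0)}$ and Lemma~\ref{lem:xinorm} again. Your write-up is in fact slightly cleaner than the paper's, since you fix the enclosing ball $B_R(0)$ with a single center at the outset.
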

\begin{proof}
Let $x\in\Rn$ and $r>0$, as in the supremum of the quasi-norm in Definition \ref{dfn:Morrey}. If $0<r\leq1$ we estimate, using Lemma \ref{lem:xinorm},
\begin{align*}
r^{n\left(\frac1{u(x)}-\frac1{p(x)}\right)}\norm{\chi_A}{L_\p(B_r(x))}
&=r^{n\left(\frac1{u(x)}-\frac1{p(x)}\right)}\norm{\chi_A\cdot\chi_{B_r(x)}}{L_\p(\Rn))}\\
&\leq r^{n\left(\frac1{u(x)}-\frac1{p(x)}\right)}\norm{\chi_{B_r(x)}}{L_\p(\Rn))}\\
&\lesssim r^{\frac n{u(x)}}r^{-\frac n{p(x)}}r^{\frac n{p(x)}}\leq r^{\frac n{u(x)}}\leq1.
\end{align*} 
In the case of $r>1$ we use $r^{n/u(x)-n/p(x)}\leq 1$ and Lemma \ref{lem:xinorm} again to obtain
\begin{align*}
r^{n\left(\frac1{u(x)}-\frac1{p(x)}\right)}\norm{\chi_A}{L_\p(B_r(x))}
&\leq \norm{\chi_A\cdot\chi_{B_r(x)}}{L_\p(\Rn))}\\
&\leq \norm{\chi_A}{L_\p(\Rn))}\lesssim R^{\frac n{p_\infty}}<\infty,
\end{align*}
where $R>1$ was chosen such that $A\subset B_R(x)$. Altogether we get $\norm{\chi_A}{\Mup}<\infty$, as required.
\end{proof}
\begin{rem}
As a clear consequence of the above result, $\Mup$ contains also all $L_\infty(\Rn)$-functions which are a.e. equal to zero outside a bounded subset of $\Rn$.
\end{rem}
Next we state the convolution inequality proved in \cite[Thm. 3.3]{CK18}, where $M_{p(\cdot)}^{u(\cdot)}(\ell_{\q})$ stands for the set of all sequences $(f_\nu)_{\nu \in \N_0}$ of (complex or extended real-valued) measurable functions on $\Rn$ such that $\norm{\left(\sum_{\nu=0}^\infty|f_\nu(\cdot)|^{\q}\right)^{1/\q}}{\Mup}$ is finite.
Such a result will be one of the main tools in further results to be presented in this paper. The functions $\eta_{\nu,m}$ considered are given for $\nu\in\N_0$ and $m>0$ by
\begin{align*}
\eta_{\nu,m}(x):=2^{\nu n}(1+2^\nu|x|)^{-m}.
\end{align*}
\begin{thm}[{\cite[Thm. 3.3]{CK18}}]\label{thm:MorreyHardy}
 Let  $p,q\in\Plog$ and $u\in\mathcal{P}(\Rn)$ with $1<p^-\leq p(x)\leq u(x)\leq \sup u < \infty$ and  $q^-,q^+\in(1,\infty)$. For every 
\begin{align*}
m>n+n\max\left(0,\sup_{x\in\Rn}\left(\frac1{p(x)}-\frac1{u(x)}\right)-\frac1{p_\infty}\right)
\end{align*} 
there exists a $c>0$ such that for all $(f_\nu)_\nu \subset M_{p(\cdot)}^{u(\cdot)}(\ell_{\q})$
 \begin{align*}
  \norm{\left(\sum_{\nu=0}^\infty|\eta_{\nu,m}\ast f_\nu(\cdot)|^{q(\cdot)}\right)^{1/\q}}{\Mup}\leq c\norm{\left(\sum_{\nu=0}^\infty|f_\nu(\cdot)|^{\q}\right)^{1/\q}}{\Mup}.
 \end{align*}

\end{thm}

To make our results more accessible we introduce the following abbreviation, which we shall use in the rest of the paper:
\begin{align}\label{eq:cpu}
	\cpu:=\max\left(0,\sup_{x\in\Rn}\left(\frac1{p(x)}-\frac1{u(x)}\right)-\frac1{p_\infty}\right).
\end{align}
It is easily seen that $\cpu=0$ if $\p=\u$ or when $\p=p$ constant.

We shall also need the following lemmas, which we have also already considered in \cite{CK18}. For the meaning of $\norm{\cdot\,}{\Muplq}$, see Definition \ref{def:TLM} below.

\begin{lem}\label{lem:monotony}
	Let $f$ and $g$ be two measurable functions with $0\leq f(x)\leq g(x)$ for a.e. $x\in\Rn$.
	Then it holds
	\begin{align*}
		\norm{f}{\Mup}\leq\norm{g}{\Mup}.
	\end{align*}
\end{lem}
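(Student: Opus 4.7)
The claim is a pure monotonicity statement, so the whole proof reduces to a chain of monotonicities along the definitions. My plan is to peel off the layers one at a time: first pass monotonicity through the supremum over balls $B_r(x)$, then through the $L_\p$ quasi-norm, and finally through the semi-modular $\varrho_\p$ that defines it.

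First I would observe that for any fixed $x\in\Rn$ and $r>0$ the map $h\mapsto \|h|L_\p(B_r(x))\|$ is monotone on non-negative functions. Indeed, on $B_r(x)$ one has $0\leq f\leq g$ a.e.; hence if we grant monotonicity of $\|\cdot\,|L_\p(\Rn)\|$ on non-negative functions and apply it to $f\chi_{B_r(x)}\leq g\chi_{B_r(x)}$ we obtain the desired inequality for the $L_\p(B_r(x))$-quasi-norms. Multiplying by the non-negative factor $r^{n(1/u(x)-1/p(x))}$ and taking the supremum over all pairs $(x,r)$ then gives $\|f|\Mup\|\leq\|g|\Mup\|$, which is the conclusion.

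Thus the only substantive step is the monotonicity of the global quasi-norm $\|\cdot\,|L_\p(\Rn)\|$. For exponents with $p(x)\geq 1$, the integrand $\phi_{p(x)}(t)$ in \eqref{eq:convmodular} is non-decreasing in $t\geq 0$ for every fixed $x$ (this is visible directly from the three-line definition of $\phi_{p(x)}$). Hence for every $\lambda>0$,
\begin{equation*}
0\leq f(x)\leq g(x)\ \text{a.e.}\quad\Longrightarrow\quad \varrho_\p(f/\lambda)\leq \varrho_\p(g/\lambda).
\end{equation*}
Consequently the set of admissible $\lambda$'s for $g$ in the definition of $\|g|L_\p(\Rn)\|$ is contained in the corresponding set for $f$, so taking infima yields $\|f|L_\p(\Rn)\|\leq\|g|L_\p(\Rn)\|$.

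To cover a general $p\in\calP$ one invokes the extension recipe \eqref{eq:Lp/t}: pick $t>0$ with $p/t\geq 1$, so that $0\leq f\leq g$ implies $0\leq|f|^t\leq|g|^t$, apply the case already settled to $L_{\p/t}(\Rn)$, and raise to the power $1/t$. This completes the proof. The argument is routine and I anticipate no real obstacle; the only mild subtlety is making sure the monotonicity of $\phi_{p(x)}$ is used correctly at the values $p(x)=\infty$, but this is immediate from its definition.
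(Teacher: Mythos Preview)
Your argument is correct and entirely standard: monotonicity of $\phi_{p(x)}(\cdot)$ gives monotonicity of the modular, hence of the Luxemburg quasi-norm, and the sup over balls and the factor $r^{n(1/u(x)-1/p(x))}\geq 0$ carry the inequality through to $\Mup$; the $t$-trick via \eqref{eq:Lp/t} handles the range $p^-\in(0,1)$. Note, however, that the paper itself does not prove this lemma at all --- it is merely stated and attributed to the companion paper \cite{CK18} --- so there is no in-paper proof to compare with.
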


\begin{lem}\label{lem:ttrick}
	Let $p,q,u\in\mathcal{P}(\Rn)$ with $p\leq u$ and $0<t<\infty$. Then for any sequence $(f_\nu)_{\nu\in\N_0}$ of measurable functions it holds
	\begin{align*}
		\norm{(|f_\nu|^t)_\nu}{M_{\frac\p t}^{\frac\u t}(\ell_{\frac \q t})}&=\norm{\left(\sum_{\nu=0}^\infty|f_\nu|^\q\right)^{t/\q}}{M_{\frac\p t}^{\frac\u t}(\Rn)}\\
		&=\norm{(f_\nu)_\nu}{\Muplq}^t,
	\end{align*}
	with the usual modification every time $q(x)=\infty$.
\end{lem}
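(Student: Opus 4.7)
The plan is to obtain both equalities essentially by unwinding the relevant definitions and then applying the homogeneity property \eqref{eq:Lp/t} to pass from exponent $p(\cdot)/t$ back to $p(\cdot)$.

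For the first equality, I would just apply the definition of the mixed-sequence Morrey quasi-norm (given just before Theorem \ref{thm:MorreyHardy}) to the sequence $(|f_\nu|^t)_{\nu\in\N_0}$ with exponents $p(\cdot)/t$, $u(\cdot)/t$, $q(\cdot)/t$. Inside the resulting expression, $\sum_\nu (|f_\nu|^t)^{q(\cdot)/t} = \sum_\nu |f_\nu|^{q(\cdot)}$ and the outer exponent $t/q(\cdot)$ appears automatically, giving exactly the middle expression (with the obvious modifications whenever $q(x)=\infty$, where one formally replaces the $\ell_{q(\cdot)/t}$-sum by a sup as usual).

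For the second equality, let $g:=\left(\sum_{\nu=0}^\infty|f_\nu|^{q(\cdot)}\right)^{1/q(\cdot)}$, so that the middle expression equals $\norm{g^t}{M^{u(\cdot)/t}_{p(\cdot)/t}(\Rn)}$ and the last equals $\norm{g}{\Mup}^t$. By \eqref{eq:Lp/t} applied on $B_r(x)$ (noting that $\chi_{B_r(x)}^t=\chi_{B_r(x)}$),
\[
\norm{g^t}{L_{p(\cdot)/t}(B_r(x))}=\norm{g^t\chi_{B_r(x)}}{L_{p(\cdot)/t}(\Rn)}=\norm{g\chi_{B_r(x)}}{L_{p(\cdot)}(\Rn)}^t=\norm{g}{L_{p(\cdot)}(B_r(x))}^t.
\]
Multiplying by $r^{n(t/u(x)-t/p(x))}=\bigl(r^{n(1/u(x)-1/p(x))}\bigr)^t$ and using that $t>0$ commutes with the supremum (i.e.\ $(\sup h)^t=\sup h^t$ for nonnegative $h$), one obtains
\[
\norm{g^t}{M^{u(\cdot)/t}_{p(\cdot)/t}(\Rn)}=\sup_{x,r}\bigl(r^{n(1/u(x)-1/p(x))}\norm{g}{L_{p(\cdot)}(B_r(x))}\bigr)^t=\norm{g}{\Mup}^t,
\]
which is the desired identity.

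There is no real obstacle here; the only thing to be careful about is the bookkeeping between the Morrey exponent $r^{n(1/u-1/p)}$ and its $t$-th power, and the usual modification in the definition of $\ell_{q(\cdot)}$ when $q(x)=\infty$ (in which case the $q(\cdot)/t$-sum is to be read as an essential supremum and the identity still goes through).
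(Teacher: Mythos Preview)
Your argument is correct: both equalities follow by unwinding the definitions together with the homogeneity identity \eqref{eq:Lp/t}, exactly as you describe. Note that the paper does not actually supply a proof here---the lemma is quoted from \cite{CK18}---but your approach is the natural (and essentially the only) one, so there is nothing to compare.
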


\begin{lem}\label{lem:Hardy}
	Let $p,q,u\in\mathcal{P}(\Rn)$ with $p\leq u$. For any sequence $(g_j)_{j\in\N_0}$ of non negative measurable functions we denote, for $\delta>0$,
	\begin{align*}
		G_k(x)=\sum_{j=0}^\infty2^{-|k-j|\delta}g_j(x), \quad x \in \Rn, \quad k\in \N_0.
	\end{align*}
	Then it holds
	\begin{align*}
		\norm{(G_k)_k}{\Muplq}\leq c(\delta,q)\norm{(g_j)_j}{\Muplq},
		\intertext{where}
		c(\delta,q)=\max\left(\sum_{j\in\mathbb{Z}}2^{-|j|\delta},\left[\sum_{j\in\mathbb{Z}}2^{-|j|\delta q^-}\right]^{1/q^-}\right).
	\end{align*}
\end{lem}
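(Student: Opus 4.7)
The plan is to reduce the inequality for the $\Muplq$ quasi-norm to a pointwise estimate for the $\ell_{q(x)}$ discrete quasi-norm, and then apply the monotonicity of the Morrey norm (Lemma \ref{lem:monotony}). More precisely, I will show that for every $x \in \Rn$,
\begin{equation*}
\left(\sum_{k=0}^\infty G_k(x)^{q(x)}\right)^{1/q(x)} \leq c(\delta,q)\left(\sum_{j=0}^\infty g_j(x)^{q(x)}\right)^{1/q(x)}.
\end{equation*}
Once this pointwise estimate is in place, the conclusion follows immediately from Lemma \ref{lem:monotony}, because the right-hand side above dominates the integrand defining the Morrey norm of $(G_k)_k$ by a constant (in $x$) multiple of the one defining the Morrey norm of $(g_j)_j$.

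To prove the pointwise estimate I would fix $x$ and write $q:=q(x)$, so that $q$ acts as a constant for the rest of the argument. Two cases naturally arise. If $q\geq 1$, I apply H\"older's inequality with respect to the probability-like weight $2^{-|k-j|\delta}$, splitting it as $2^{-|k-j|\delta/q'}\cdot 2^{-|k-j|\delta/q}$, to get
\begin{equation*}
\Bigl(\sum_{j} 2^{-|k-j|\delta}g_j(x)\Bigr)^{q}\leq \Bigl(\sum_{j} 2^{-|k-j|\delta}\Bigr)^{q-1}\sum_{j}2^{-|k-j|\delta}g_j(x)^q,
\end{equation*}
and then I sum over $k$ and use Fubini to recover $(\sum_j g_j(x)^q)^{1/q}$ times the constant $\sum_{j\in\Z}2^{-|j|\delta}$. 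If $q<1$, the $q$-subadditivity $(\sum a_j)^q\leq \sum a_j^q$ is even simpler: it gives directly
\begin{equation*}
\sum_k \Bigl(\sum_j 2^{-|k-j|\delta}g_j(x)\Bigr)^q \leq \sum_j g_j(x)^q \sum_{\ell\in\Z} 2^{-|\ell|\delta q},
\end{equation*}
which yields the factor $\bigl[\sum_\ell 2^{-|\ell|\delta q}\bigr]^{1/q}$. Monotonicity of this last expression in $q$ (it is decreasing) lets me replace $q$ by $q^-$ to obtain a single bound uniform in $x$, matching the expression for $c(\delta,q)$ in the statement.

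I do not expect any serious obstacle: the variable exponent causes no real trouble because, at each point $x$, the argument is just the standard constant-exponent Hardy inequality for convolutions on $\Z$ with the geometric kernel $2^{-|k-j|\delta}$. The only minor subtlety is the uniform control of the $q<1$ constant, which is handled by the monotonicity observation already baked into the definition of $c(\delta,q)$ via $q^-$. After that, Lemma \ref{lem:monotony} closes the argument in one line.
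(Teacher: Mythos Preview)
Your argument is correct and is the standard proof of this Hardy-type lemma; the pointwise $\ell_{q(x)}$ estimate via the $q\ge 1$ (H\"older) / $q<1$ (subadditivity) split, followed by Lemma~\ref{lem:monotony}, is exactly the usual route. The paper does not prove the lemma here but quotes it from \cite{CK18}, where the same approach is used (you may wish to add one line covering the easy case $q(x)=\infty$, which the statement allows).
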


Finally, we state and prove some results regarding the quasi-norm in the variable exponent Morrey spaces which will prove useful later on.

\begin{lem}
\label{lem:lem3.7}
Let $p,u\in\calP$ with $p\leq u$ and $\inf p >0$. Then $\Mup$ can be equivalently defined as the collection of all (complex or extended real-valued) measurable functions $f$ (on $\Rn$) with
$$\|f\|_{D,p,u} := \sup_{j,x,k} 2^{-jn\left(\frac{1}{u(x)}-\frac{1}{p(x)}\right)}\|f|L_{p(\cdot)}(Q_{j,k})\| < \infty,$$
where the supremum runs over all $j\in\Z$, $x\in \Rn$ and $k\in \Zn$ with $|x-2^{-j}k|_\infty\leq \frac{3}{2}2^{-j}$. Moreover, $\norm{\cdot\,}{\Mup}$ and $\|\cdot\|_{D,p,u}$ are indeed equivalent expressions in $\Mup$.
\end{lem}

\begin{proof}
\underline{First step:} Here we show that $\Mup$  can be equivalently defined by the finiteness of
$$\|f\|_{Q,p,u} := \sup_{x\in\Rn,r>0} r^{n\left(\frac{1}{u(x)}-\frac{1}{p(x)}\right)}\|f|L_\p(Q_r(x))\|.$$
Since $|y|_\infty \leq |y|$, then $B_r(x)\subset Q_r(x)$ and $\|f|\Mup\|\leq \|f\|_{Q,p,u}$. On the other hand, since $|y|\leq \sqrt{n}|y|_\infty$, then $Q_r(x)\subset B_{\sqrt{n}r}(x)$ and, for any $x\in\Rn$ and $r>0$,
\begin{eqnarray*}
r^{n\left(\frac{1}{u(x)}-\frac{1}{p(x)}\right)}\|f|L_\p(Q_r(x))\| & \leq & \sqrt{n}^{n\left(\frac{1}{p(x)}-\frac{1}{u(x)}\right)} (\sqrt{n}r)^{n\left(\frac{1}{u(x)}-\frac{1}{p(x)}\right)} \|f|L_\p(B_{\sqrt{n}r}(x)\| \\
&\leq & \sqrt{n}^{\frac{n}{\inf p}} \|f|\Mup\|.
\end{eqnarray*} 
\underline{Second step:} Here we show that $\|f\|_{D,p,u}\approx\|f\|_{Q,p,u}$, which finishes the proof together with the first step.

Given $j\in\Z$, $x\in \Rn$ and $k\in \Zn$ with $|x-2^{-j}k|_\infty\leq \frac{3}{2}2^{-j}$, define the positive number $r:=2^{-j}$. Notice that $\overset{\circ}{Q}_{j,k} \subset Q_{2r}(x)$, therefore
\begin{eqnarray*}
\lefteqn{2^{-jn\left(\frac{1}{u(x)}-\frac{1}{p(x)}\right)}\|f|L_{p(\cdot)}(Q_{j,k})\|} \\
& \leq & r^{n\left(\frac{1}{u(x)}-\frac{1}{p(x)}\right)}\|f|L_\p(Q_{2r}(x))\| \; \leq \; 2^{\frac{n}{\inf p}} \|f\|_{Q,p,u},
\end{eqnarray*}
hence $\|f\|_{D,p,u} \lesssim \|f\|_{Q,p,u}$.

Now, we prove the opposite estimate.
Given $x\in\Rn$ and $r>0$, let $j\in\Z$ be chosen such that $2^{-j-2}<r\leq 2^{-j-1}$ and pick $k_{r,x}\in\Zn$ such that $|2^jx-k_{r,x}|_\infty\leq\frac{1}{2}$. Clearly,
$$Q_r(x) \subset Q_{2^{-j-1}}(x) \subset \bigcup_k Q_{j,k},$$
where the union runs over all $k\in\Zn$ with $|k-k_{r,x}|_\infty \leq 1$. Notice that the number of cubes in the union above depends only on $n$. We denote by $N$ such a number and have that
\begin{eqnarray*}
\|f\chi_{Q_r(x)}|L_\p(\Rn)\| & \leq & \|f\chi_{\bigcup_k \overset{\circ}{Q}_{j,k}} | L_\p(\Rn) \| \\
& = & \| f \sum_k \chi_{\overset{\circ}{Q}_{j,k}} | L_\p(\Rn)\| \\
& \leq & c_{p^-,n} \sum_k \|f \chi_{Q_{j,k}} | L_\p(\Rn) \|,
\end{eqnarray*}
where $c_{p^-,n}>0$ depends only on $p^-$ and $n$ and the sum runs also over all $k\in\Zn$ with $|k-k_{r,x}|_\infty \leq 1$. Hence
\begin{eqnarray*}
\lefteqn{r^{n\left(\frac{1}{u(x)}-\frac{1}{p(x)}\right)}\|f|L_\p(Q_r(x))\|} \\
& \leq & c_{p^-,n} \sum_k r^{n\left(\frac{1}{u(x)}-\frac{1}{p(x)}\right)}\|f|L_\p(Q_{j,k})\| \\
& \leq & c_{p^-,n} \sum_k 2^{2n\left(\frac{1}{p(x))}-\frac{1}{u(x)}\right)} 2^{-jn\left(\frac{1}{u(x)}-\frac{1}{p(x)}\right)}\|f|L_{p(\cdot)}(Q_{j,k})\| \\
& \leq & c_{p^-,n} 2^\frac{2n}{\inf p} N \|f\|_{D,p,u},
\end{eqnarray*}
using also the fact that, for each considered $k$,
$$|x-2^{-j}k|_\infty \leq |x-2^{-j}k_{r,x}|_\infty + |2^{-j}k_{r,x}-2^{-j}k|_\infty \leq \frac{3}{2}2^{-j}.$$
\end{proof}

\begin{rem} 
\label{rem:remark2}
If $\frac{1}{p}$ is also locally log-H\"older continuous, then for the terms with $j\geq 0$ we can equivalently use $2^{\frac{jn}{p(2^{-j}k)}}$ instead of $2^{\frac{jn}{p(x)}}$ in $\|f\|_{D,p,u}$. This follows by standard arguments from the $\log$ H\"older continuity, see \cite{DHHR}.
\end{rem}

\begin{lem}
\label{lem:lem3.9}
Let $u\in\calP$ and $p\in\Plog$ with $p\leq u$. Let $f$ be of the form 
$$f=\sum_{m\in\Zn} h_{\nu,m} \chi_{\nu,m},$$
for $\nu \in \N_0$ and $h_{\nu,m}\in\mathbb C$. Up to equivalence constants independent of $\nu$, we have that in the calculation of $\|f\|_{D,p,u}$ we only need to consider $j\in\Z$ such that $j\leq \nu$.
\end{lem}

\begin{proof}
Clearly, the new expression, restricting $j\in\Z$ so that $j\leq \nu$, is bounded from above by $\|f\|_{D,p,u}$. On the other hand, we are going to show that for $f$ of the form given, all the terms with $j>\nu$ of the supremum defining $\|f\|_{D,p,u}$ are bounded from above by a suitable constant times a corresponding term for $j=\nu$, which shall conclude the proof.

Given $j\in\Z$ with $j>\nu$, $x\in \Rn$ and $k\in \Zn$ with $|x-2^{-j}k|_\infty\leq \frac{3}{2}2^{-j}$, the corresponding term in the sup defining $\|f\|_{D,p,u}$ is
\begin{equation}
2^{-jn\left(\frac{1}{u(x)}-\frac{1}{p(x)}\right)} \Big\| \sum_{m\in\Zn} h_{\nu,m} \chi_{\nu,m} \chi_{j,k} | L_\p(\Rn) \Big\|.
\label{eq:termj>nu}
\end{equation}
Since necessarily here $j>0$, we can use the preceding remark and instead of $2^{\frac{jn}{p(x)}}$ we use $2^{\frac{jn}{p(2^{-j}k)}}$ in the above expression.

Notice that, by the dyadic structure, either $Q_{j,k}\subset Q_{\nu,m}$ or $\overset{\circ}{Q}_{j,k}\cap Q_{\nu,m}=\emptyset$. The first situation occurs just for one $m\in\Zn$, which we shall denote by $m_{j,k}$. So, \eqref{eq:termj>nu} turns out to be equivalent to
\begin{eqnarray}
\lefteqn{2^{-\frac{jn}{u(x)}} 2^{\frac{jn}{p(2^{-j}k)}} \norm{h_{\nu,m_{j,k}}\chi_{j,k}}{L_\p(\Rn)}} \nonumber \\
& \lesssim & 2^{-\frac{jn}{u(x)}} |h_{\nu,m_{j,k}}| \nonumber \\
& \lesssim & 2^{-\frac{\nu n}{u(x)}} 2^{\frac{\nu n}{p(2^{-\nu}m_{j,k})}} |h_{\nu,m_{j,k}}| 2^{\frac{n}{p^-}} \norm{\chi_{\nu,m_{j,k}}}{L_\p(\Rn)}.
\label{eq:uselemma2.2}
\end{eqnarray}
We have used Lemma \ref{lem:xinorm} to establish the two preceding inequalities. Observe now that $|2^{-\nu}m_{j,k}-2^{-j}k|_\infty \leq 2^{-\nu-1}-2^{-j-1}$, hence
$$|x-2^{-\nu}m_{j,k}|_\infty \leq |x-2^{-j}k|_\infty + |2^{-j}k-2^{-\nu}m_{j,k}|_\infty < \frac{3}{2}2^{-\nu},$$
and using Remark \ref{rem:remark2} we get that \eqref{eq:uselemma2.2} can be estimated by
\begin{eqnarray*}
& \approx & 2^{-\nu n\left(\frac{1}{u(x)}-\frac{1}{p(x)}\right)} \norm{h_{\nu,m_{j,k}} \chi_{\nu,m_{j,k}} \chi_{\nu,m_{j,k}}}{L_\p(\Rn)} \\
& \leq & 2^{-\nu n\left(\frac{1}{u(x)}-\frac{1}{p(x)}\right)} \norm{\sum_{m\in\Zn} h_{\nu,m} \chi_{\nu,m}}{L_\p(Q_{\nu,m_{j,k}})}.
\end{eqnarray*} 
\end{proof}

\section{2-microlocal Triebel-Lizorkin-Morrey spaces and their Peetre maximal function characterization}
We need admissible weight sequences and so called admissible pairs in order to define the spaces under consideration.
\begin{dfn}\label{dfn:admissiblePair}
A pair $(\check{\varphi}, \check{\Phi})$ of functions in $\Sn$ is called admissible if
\begin{align*}
\supp{\varphi}&\subset\{x\in\Rn: \frac12\leq|x|\leq2\}\text{ and }\supp\Phi\subset\{x\in\Rn:|x|\leq2\}\\
\intertext{with}
|\varphi(x)|\geq c&>0\text{ on }\{x\in\Rn:\frac35\leq|x|\leq\frac53\},\\
|\Phi(x)|\geq c&>0\text{ on }\{x\in\Rn:|x|\leq\frac53\}.
\end{align*}
Further, we set $\varphi_j(x):=\varphi(2^{-j}x)$ for $j\geq1$ and $\varphi_0:=\Phi$. Then $(\varphi_j)_{j\in\N_0}\subset\Sn$ and
\begin{align*}
\supp{\varphi_j}\subset\{x\in\Rn:2^{j-1}\leq|x|\leq2^{j+1}\}.
\end{align*}
\end{dfn}
\begin{dfn}\label{weight}
 Let $\alpha_1\leq\alpha_2$ and $\alpha\geq0$ be real numbers. The class of admissible weight sequences $\mgk$ is the collection of all sequences $\vek{w}=(w_j)_{j\in\N_0}$ of measurable functions $w_j$ on $\Rn$ such that
 \begin{itemize}
  \item[(i)] There exists a constant $C>0$ such that 
  \begin{align*}
   0<w_j(x)\leq C w_j(y)(1+2^j|x-y|)^\alpha\text{ for }x,y\in\Rn \text{ and }j\in\N_0; 
  \end{align*}
  \item[(ii)] For all $x\in\Rn$ and $j\in\N_0$ 
  \begin{align*}
   2^{\alpha_1}w_j(x)\leq  w_{j+1}(x)\leq2^{\alpha_2}w_j(x). 
  \end{align*}
 \end{itemize}
\end{dfn}
Now, we can give the definition of 2-microlocal Triebel-Lizorkin-Morrey spaces.
\begin{dfn}\label{def:TLM}
 Let $(\varphi_j)_{j\in\N_0}$ be constructed as in Definition \ref{dfn:admissiblePair} and $\vek{w}\in\mgk$ be admissible weights. Let  $p,q\in\Plog$ and $u\in\mathcal{P}(\Rn)$ with $0<p^-\leq p(x)\leq u(x)\leq \sup u < \infty$ and  $q^-,q^+\in(0,\infty)$. Then
 \begin{align*}
  \Mufwpqx&:=\left\{f\in\SSn:\norm{f}{\Mufwpqx}<\infty\right\}
  \intertext{where}
  \norm{f}{\Mufwpqx}&:=\norm{(w_j(\varphi_j\hat{f})^\vee)_j}{\Muplq}  \\
	&:=\norm{\left(\sum_{j=0}^\infty|w_j(\varphi_j\hat{f})^\vee|^\q\right)^{1/\q}}{\Mup}.
 \end{align*}
\end{dfn}
\begin{rem}
\begin{itemize}
\item[(i)] \label{Eqnormed} First observations about $\Mufwpqx$, e.g. the fact that it is a quasi-normed space, can be seen in \cite{CK18}, to which we refer the reader. \\
\item[(ii)] On the other hand, to show that
\begin{align} \label{eq:doublembed}
\Sn\hookrightarrow\Mufwpqx\hookrightarrow\SSn
\end{align}
we refer to Corollaries \ref{cor:SinE} and \ref{cor:EinSS} below. The proofs come as by-products of the arguments leading to the atomic/molecular characterization of the spaces $\Mufwpqx$. So, it can be considered an interesting application of the tools developed in this paper and constitutes an approach different from the one classically used (e.g. the proof of \cite[(2.3.3/1)]{Tri83}).\\
\item[(iii)] As regards the completeness of $\Mufwpqx$, it follows by standard arguments once the second embedding in \eqref{eq:doublembed} is clear: see Corollary \ref{cor:completeness}.
\end{itemize}
\label{Eelemprop}
\end{rem}
In \cite{CK18} we have also proved the Peetre maximal function characterization of these spaces. Since we shall need to use it below, we recall its statement here.

We need the following notion first.

Given a system $\{\psi_j\}_{j\in\N_0}\subset\Sn$ we set
$\Psi_j=\hat\psi_j\in\Sn$ and define the Peetre maximal
function of $f\in\SSn$ for every $j\in\N_0$ and $a>0$ as
\begin{align*}
    (\Psi_j^*f)_a(x):=\sup_{y\in\R^n}\frac{|(\Psi_j\ast
    f)(y)|}{1+|2^j(y-x)|^a},\quad x\in\R^n.
\end{align*}
We start with two given functions $\psi_0,\psi_1\in\Sn$ and define  $\psi_j(x):=\psi_1(2^{-j+1}x)$, for $x\in\R^n$ and $j\in\N\setminus\{1\}$.
Furthermore, for all $j\in\N_0$ we write, as mentioned, $\Psi_j=\hat{\psi_j}$.

Now, we state the announced Peetre maximal function characterization.
\begin{thm}[{\cite[Thm. 4.5]{CK18}}]\label{thm:lm}
    Let $\vek{w}=(w_j)_{j\in\N_0}\in\mgk$. Assume $p,q\in\Plog$ and $u\in\mathcal{P}(\Rn)$ with $0<p^-\leq p(x)\leq u(x)\leq \sup u < \infty$ and  $q^-,q^+\in(0,\infty)$. Let $R\in\N_0$ with $R>\alpha_2$ and let further $\psi_0,\psi_1$ belong to $\Sn$ with
    \begin{align*}
        D^\beta\psi_1(0)=0,\quad\text{ for
        }0\leq|\beta|< R,
    \end{align*}
    and
    \begin{align*}
        |\psi_0(x)|&>0\quad\text{on}\quad\{x\in\R^n:|x|\leq k\varepsilon\},\\
        |\psi_1(x)|&>0\quad\text{on}\quad\{x\in\R^n:\varepsilon \leq |x| \leq 2k\varepsilon\}
    \end{align*}
    for some $\varepsilon>0$ and $k\in(1,2]$.\\
    For  $a>n\left(\frac{1}{\min(p^-,q^-)}+\cpu\right)+\alpha$ we have that
        \begin{align*}
            \norm{f}{\Mufwpqx}\approx\norm{((\Psi_j\ast
        f)w_j)_j}{\Muplq}\approx\norm{((\Psi_j^*f)_aw_j)_j}{\Muplq}
    \end{align*}
    holds for all $f\in\SSn$.
\end{thm}

Notice that this theorem contains the conclusion that the 2-microlocal Morreyfied spaces of variable exponents given in Definition \ref{def:TLM} are independent of the admissible pair considered.

\section{Atomic and molecular characterizations}
In this section we show a characterization of the spaces $\Mufwpqx$ by atoms and molecules. First of all we need to introduce the corresponding sequence spaces.
\begin{dfn}
Let $p, q, u\in\calP$ with $p\leq u$ and $\vek{w}=(w_k)_{k\in\N_0}\in\mgk$. Then for all complex valued sequences
\begin{align*}
{\lambda}=\{\lambda_{\nu,m}:\nu\in\N_0, m\in\Z^n\}\quad\text{we define }\quad
\mufwpqx:=\left\{\lambda:\norm{\lambda}{\mufwpqx}<\infty \right\}\\
\intertext{where}
\norm{\lambda}{\mufwpqx}:=\norm{\left(\sum_{\nu=0}^\infty\sum_{m\in\Z^n}|w_\nu(2^{-\nu}m)\lambda_{\nu,m}\chi_{\nu,m}(\cdot)|^\q\right)^{1/\q}}{\Mup}
\end{align*}
(with the usual modification every time $q(x)$ equals $\infty$).

\noindent We also define
\begin{align*}
\mubwpinfx&:=\left\{\lambda:\norm{\lambda}{\mubwpinfx}<\infty \right\}\\
\intertext{where}
\norm{\lambda}{\mubwpinfx}&:=\sup_{\nu\in\N_0} \norm{\sum_{m\in\Z^n}w_\nu(2^{-\nu}m)\lambda_{\nu,m}\chi_{\nu,m}(\cdot)}{\Mup}.
\end{align*}
\end{dfn}

\begin{rem}
\begin{itemize}
	\item[(i)] It is easily seen that the above sequence spaces are quasi-normed and that the quasi-norm is a norm when $\min(p^-,q^-)\geq1$.
	\item[(ii)] The following embeddings hold trivially:
	$$\mufwpqx \hookrightarrow \mathit{e}^{\vek{w},u(\cdot)}_{\p,\infty} \hookrightarrow \mubwpinfx.$$
\end{itemize}
\label{smallen}
\end{rem}

Now it is time to define atoms, which are one of the building blocks we consider here.
\begin{dfn}
Let $K,L\in\N_0$ and $d>1$. For each $\nu\in\N_0$ and $m\in\Z^n$ a $C^K$-function $a_{\nu,m}$ is called a $[K,L,d]$-atom (supported near $Q_{\nu,m}$) if
\begin{align*}
\supp a_{\nu,m}&\subset dQ_{\nu,m},\\
\sup_{x\in\Rn}|D^\gamma a_{\nu,m}(x)|&\leq2^{|\gamma|\nu}\quad\text{for }0\leq|\gamma|\leq K
\intertext{and}
\int_\Rn x^\gamma a_{\nu,m}(x)dx&=0\qquad\text{for $\nu\geq1$ and }0\leq|\gamma|< L.
\end{align*}
\end{dfn}
We also give the definition of molecules, where in contrast to atoms the compact support condition is replaced by a decay condition.
\begin{dfn} \label{def:molecules}
Let $K,L\in\N_0$ and $M>0$. For each $\nu\in\N_0$ and $m\in\Z^n$ a $C^K$-function $\mu_{\nu,m}\in C^K(\Rn)$ is called a $[K,L,M]$-molecule (concentrated near $Q_{\nu,m}$) if
\begin{align}
|D^\beta\mu_{\nu,m}(x)|&\leq2^{|\beta|\nu}(1+2^\nu|x-2^{-\nu}m|)^{-M}\quad\text{for }0\leq|\beta|\leq K \notag
\intertext{and}
\int_\Rn x^\beta\mu_{\nu,m}(x)dx&=0\quad\text{for }\nu\geq1\text{ and }0\leq|\beta|< L. \label{moment}
\end{align}
\end{dfn}

For the first direction of the atomic characterization we show that any function in our function spaces can be written as a linear combination of atoms.
\begin{thm} \label{1st_direction}
Let $\vek{w}\in\mgk$, $p, q \in \Plog$ and $u \in\mathcal{P}(\Rn)$ with $0<p^-\leq p(x)\leq u(x)\leq \sup u < \infty$ and  $q^-,q^+\in(0,\infty)$. Further, let $K,L\in\N_0$ and $d>1$.
For each $f\in\Mufwpqx$ there exist $[K,L,d]$-atoms $a_\num \in \Sn$ and $\lambda(f)\in\mufwpqx$ such that 
\begin{align}
f=\sum_{\nu=0}^\infty\sum_{m\in\Z^n}\lambda_\num(f)\, a_\num,\quad\text{convergence in }\SSn, \notag\\
\intertext{and there exists a constant $c>0$ independent of $f$ such that}
\norm{\lambda(f)}{\mufwpqx}\leq c\norm{f}{\Mufwpqx}. \label{norm_estimate}
\end{align}
\end{thm}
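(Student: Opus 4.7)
The plan is to realize the decomposition by means of a discrete local-means (Frazier--Jawerth/Rychkov) reproducing identity. First, fix $1 < d' < d$ and choose kernels $\Phi^{(0)},\Phi,\Psi^{(0)},\Psi \in C^\infty_c(\Rn)$ all supported in $d'Q_{0,0}$ such that: $\Phi$ has vanishing moments of order strictly less than $L$; $\Psi^{(0)}$ and $\Psi$ carry sufficiently many vanishing moments and satisfy the Tauberian-type conditions \eqref{LokaleMittelTauber1,5}--\eqref{LokaleMittelTauber2,5} (with $\psi_0=\Psi^{(0)}$, $\psi_1=\Psi$); and the discrete reproducing identity
\begin{equation*}
f \;=\; \sum_{m\in\Zn}(\Psi^{(0)}*f)(m)\,\Phi^{(0)}(\cdot-m)\;+\;\sum_{\nu\geq 1}\sum_{m\in\Zn} 2^{-\nu n}(\Psi_\nu*f)(2^{-\nu}m)\,\Phi(2^\nu\cdot - m)
\end{equation*}
holds in $\SSn$ for every $f\in\SSn$, where $\Psi_\nu(x):=2^{\nu n}\Psi(2^\nu x)$. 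Such $C^\infty_c$ systems are classical in the Frazier--Jawerth/Rychkov theory.

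With a constant $C>0$ to be fixed, set
\begin{equation*}
\lambda_{\nu,m}(f) \;:=\; C\sup_{y\in Q_{\nu,m}} |(\Psi_\nu*f)(y)|,\qquad a_{\nu,m}(x) \;:=\; \lambda_{\nu,m}(f)^{-1}\, 2^{-\nu n}(\Psi_\nu*f)(2^{-\nu}m)\, \Phi(2^\nu x - m)
\end{equation*}
for $\nu\geq 1$ (and analogously at $\nu=0$, replacing $\Phi$ by $\Phi^{(0)}$ and dropping the $2^{-\nu n}$), with the convention that $a_{\nu,m}\equiv 0$ when the supremum vanishes. Support in $dQ_{\nu,m}$ (because $d'<d$), membership in $\Sn$ and the vanishing moments up to order $L-1$ are inherited directly from $\Phi$; the chain rule gives $|D^\gamma \Phi(2^\nu\cdot - m)|\lesssim 2^{|\gamma|\nu}$ on $\Rn$, so taking $C$ large enough absorbs the implicit constant and yields $|D^\gamma a_{\nu,m}|\leq 2^{|\gamma|\nu}$ for $|\gamma|\leq K$. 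Thus each $a_{\nu,m}$ is a $[K,L,d]$-atom and the reproducing identity reads $f=\sum_{\nu,m}\lambda_{\nu,m}(f)\,a_{\nu,m}$ in $\SSn$.

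For the sequence-norm estimate, the key observation is that whenever $x,y\in Q_{\nu,m}$ one has $|2^\nu(y-x)|\lesssim 1$, hence
\begin{equation*}
|(\Psi_\nu*f)(y)| \;\leq\; \bigl(1+|2^\nu(y-x)|^a\bigr)(\Psi_\nu^* f)_a(x) \;\lesssim\; (\Psi_\nu^*f)_a(x).
\end{equation*}
Taking the supremum in $y$ gives $\lambda_{\nu,m}(f)\lesssim (\Psi_\nu^*f)_a(x)$ for every $x\in Q_{\nu,m}$. Combining with $w_\nu(2^{-\nu}m)\approx w_\nu(x)$ on $Q_{\nu,m}$ (Definition \ref{weight}(i)), and with the fact that at each $x$ only one cube $Q_{\nu,m}$ contributes, we obtain the pointwise bound
\begin{equation*}
\sum_m\bigl|w_\nu(2^{-\nu}m)\lambda_{\nu,m}(f)\chi_{\nu,m}(x)\bigr|^{q(x)} \;\lesssim\; \bigl[w_\nu(x)(\Psi_\nu^*f)_a(x)\bigr]^{q(x)}.
\end{equation*}
Summing over $\nu$, taking the $1/q(x)$-power and then the $\norm{\cdot\,}{\Mup}$-norm yields, provided $a>n(1/\min(p^-,q^-)+\cpu)+\alpha$, the bound $\norm{\lambda(f)}{\mufwpqx}\lesssim\norm{((\Psi_\nu^*f)_a w_\nu)_\nu}{\Muplq}\approx\norm{f}{\Mufwpqx}$ by Theorem \ref{thm:lm}.

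The principal obstacle is providing the $\SSn$-convergent discrete reproducing identity with $C^\infty_c$ kernels of prescribed vanishing-moment order; this is handled by invoking the Rychkov-type local-means construction as a black box, arranging the choice of $\Psi^{(0)},\Psi$ so that they simultaneously fulfil the hypotheses of Theorem \ref{thm:lm}, and noting that the $\SSn$-convergence of $\sum\lambda_{\nu,m}(f)a_{\nu,m}$ is then just that of the reproducing identity.
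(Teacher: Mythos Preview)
Your approach is essentially the same as the paper's: both build the atoms from a discrete Calder\'on/local-means reproducing formula, bound the resulting coefficients pointwise by a Peetre maximal function, and then invoke Theorem~\ref{thm:lm}; the paper simply outsources the construction and the pointwise estimate \eqref{eq:pointwise} to \cite[Theorem~4.12 and \S7.3]{AC16b} rather than writing them out. One slip to fix: in Theorem~\ref{thm:lm} the functions $\psi_0,\psi_1$ live on the Fourier side and the convolution kernels are $\Psi_j=\hat{\psi}_j$, so to use your space-side kernels $\Psi^{(0)},\Psi$ you must take $\psi_0,\psi_1$ proportional to $\widehat{\Psi^{(0)}},\widehat{\Psi}$ --- the Tauberian conditions \eqref{LokaleMittelTauber1,5}--\eqref{LokaleMittelTauber2,5} are then conditions on $\widehat{\Psi^{(0)}},\widehat{\Psi}$, while \eqref{LokaleMittelMomentCond} becomes the vanishing-moment condition on $\Psi$ that you already impose.
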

\begin{proof}
Since $f \in \SSn$, we have by \cite[Theorem 4.12(a)]{AC16b} that there exist $[K,L,d]$-atoms $a_\num \in \Sn$ and $\lambda(f):=(\lambda_\num)_\num\subset\mathbb C$ such that 
\begin{align} \label{sumf}
f=\sum_{\nu=0}^\infty\sum_{m\in\Z^n}\lambda_\num(f)\, a_\num
\end{align}
with the inner sum taken pointwisely and the outer sum converging in $\SSn$. Afterwards, looking at the proof of \cite[Theorem 4.12]{AC16b} we see in \cite[7.3, Step 2]{AC16b} that the following pointwise estimate holds for a.e. $x \in \Rn$:
\begin{equation}
\left| \sum_{m \in \Z^n} w_\nu(x) \lambda_\num(f) \chi_\num(x) \right| \leq c_a w_\nu(x) (\theta^*_\nu f)_a(x),
\label{eq:pointwise}
\end{equation}
with $a>0$ at our disposal and $c_a>0$ independent of $x \in \Rn$, $\nu \in \N_0$ and $f \in \SSn$. Here $\theta_\nu=2^{\nu n}\theta(2^\nu \cdot)$, $\nu \in \N$, where $\theta_0$, $\theta$ fit, for some $k \in (1,2]$ and $\varepsilon > 0$, in the requirements of Theorem 3.1 of \cite{AC16} for the $\psi_0$, $\psi$ in that theorem and
$$D^\beta\hat{\theta}_1(0)=0\quad\text{ for any } \, \beta \in \N_0^n.$$
Defining now $\psi_0:=\hat{\theta}_0(-\cdot)$ and $\psi_1:=\hat{\theta}(-\frac{\cdot}{2})$, it is easy to see that these $\psi_0$, $\psi_1$ satisfy the conditions of our Theorem \ref{thm:lm} for the same $k$ and $\varepsilon$ as above. We can then apply that theorem with $R>\alpha_2$ and $a > n\left(\frac{1}{\min(p^-,q^-)}+\cpu\right)+\alpha$ and get
$$\norm{f}{\Mufwpqx}\approx\norm{((\theta_\nu^*f)_aw_\nu)_\nu}{\Muplq}.$$
Combining this with \eqref{eq:pointwise} we get using Lemma \ref{lem:monotony}
\begin{equation*}
\norm{\lambda(f)}{\mufwpqx} \approx \norm{\sum_{m \in \Z^n} w_\nu(\cdot) \lambda_\num(f) \chi_\num(\cdot)}{\Muplq} \lesssim \norm{f}{\Mufwpqx}.
\end{equation*}
Using now the hypothesis $f\in\Mufwpqx$ we conclude that $\lambda(f)\in\mufwpqx$ together with the estimate \eqref{norm_estimate}. On the other hand, having now $\lambda(f)\in\mufwpqx$, the proof that the inner sum in \eqref{sumf} also converges in $\SSn$ for the regular distribution given by the pointwise sum can be done by adapting the argument in the second step of the proof of Theorem \ref{convergenceS'} below to our situation here, where we have atoms $a_\num$ instead of molecules $\mu_\num$: on one hand the mentioned argument does not use the assumption on $L$ in that theorem; on the other hand, in the present situation we can choose $M$ fitting the hypotheses of that theorem, since, given any $M>0$, $\left(1+d\,\sqrt{n}/2\right)^{-M}a_\num$ are $[K,L,M]$-molecules concentrated near $Q_\num$ (cf. \cite[Remark 4.3]{AC16b}).
\end{proof}

Before coming to the other direction in the characterizations of $\Mufwpqx$ with atoms and molecules, we clarify the convergence of the sums. Since every $[K,L,d]$-atom supported near $Q_{\nu,m}$ is --- up to a constant factor --- a $[K,L,M]$-molecule concentrated near $Q_{\nu,m}$, it is enough to show the convergence with molecules. Our first theorem in this direction is proved by mixing some ideas from the proofs of \cite[Lemma 3.11]{Kempka10} and \cite[Proposition 4.6]{AC16b}.
\begin{thm} \label{convergenceS'}
Let $\vek{w}\in\mgk$, $p \in \Plog$ and $q, u \in\mathcal{P}(\Rn)$ with $p\leq u$. Let $\lambda\in\mubwpinfx$ and $(\mu_{\nu,m})_{\num}$ be $[K,L,M]$-molecules with 
\begin{align*}
L>-\alpha_1+\frac n{\inf u}\quad\text{and}\quad M>L+2n+2\alpha.
\end{align*}
Then 
\begin{align}\label{eq:fsum}
\sum_{\nu=0}^\infty\sum_{m\in\Z^n}\lambda_{\nu,m}\mu_{\nu,m}\quad\text{converges in }\SSn
\end{align}
and the convergence in $\SSn$ of the inner sum gives the regular distribution obtained by taking the corresponding pointwise convergence. Moreover, the sum 
\begin{align*}
\sum_{(\nu,m)\in\N_0\times\Z^n}\lambda_{\nu,m}\mu_{\nu,m}\quad\text{converges also in }\SSn
\end{align*}
to the same distribution as the iterated sum in \eqref{eq:fsum}.
\end{thm}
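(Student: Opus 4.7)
My plan is to reduce everything to an absolute-summability estimate for the pairing $\langle \lambda_{\nu,m}\mu_{\nu,m}, \phi\rangle$ with an arbitrary $\phi\in\Sn$, and then deduce pointwise convergence of the inner sum, the regular-distribution identification, the $\SSn$-convergence of the iterated sum, and finally the unordered convergence from that single estimate.

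\textbf{Pointwise bound on the coefficients.} Evaluating the $\mubwpinfx$ quasi-norm of $\lambda$ by restricting to the inscribed ball $B_{2^{-\nu-1}}(2^{-\nu}m)\subset Q_{\nu,m}$, on which only the $(\nu,m)$-term of the inner sum survives, together with Lemma \ref{lem:xinorm} gives
\[
|w_\nu(2^{-\nu}m)\lambda_{\nu,m}|\,2^{-\nu n/u(2^{-\nu}m)} \;\lesssim\; \norm{\lambda}{\mubwpinfx},
\]
and hence $|\lambda_{\nu,m}|\lesssim 2^{\nu n/\inf u}w_\nu(2^{-\nu}m)^{-1}\norm{\lambda}{\mubwpinfx}$. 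Crucially, I would bound $w_\nu(2^{-\nu}m)^{-1}$ by $2^{-\alpha_1\nu}w_0(2^{-\nu}m)^{-1}\lesssim 2^{-\alpha_1\nu}(1+|2^{-\nu}m|)^\alpha$, going through $w_0$ instead of $w_\nu(0)$. The payoff is a factor $(1+|2^{-\nu}m|)^\alpha$ rather than the coarser $(1+|m|)^\alpha$, which is exactly what saves an extra $\alpha$ in the final exponent in $\nu$.

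\textbf{Inner sum and regular distribution.} Combining the coefficient bound with the molecule decay $|\mu_{\nu,m}(x)|\leq(1+2^\nu|x-2^{-\nu}m|)^{-M}$ and $M>n+\alpha$, the series $\sum_m\lambda_{\nu,m}\mu_{\nu,m}(x)$ is absolutely convergent for every $x$ to a measurable function $g_\nu$. The same estimates show that $g_\nu\phi\in L_1(\Rn)$ for $\phi\in\Sn$, so $g_\nu\in\SSn$, and Lebesgue dominated convergence on partial sums in $m$ yields the $\SSn$-convergence of the inner sum to $g_\nu$.

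\textbf{Outer sum via moment cancellation.} For $\nu\geq1$ I would subtract from $\phi$ its Taylor polynomial of degree $L-1$ at $2^{-\nu}m$, so that \eqref{moment} kills the polynomial part. Using $(1+|y|)^{-N}\leq (1+|2^{-\nu}m|)^{-N}(1+|x-2^{-\nu}m|)^N$ on the segment joining $2^{-\nu}m$ to $x$, plus the change of variables $y=2^\nu(x-2^{-\nu}m)$, yields
\[
|\langle\mu_{\nu,m},\phi\rangle|\;\lesssim\;2^{-\nu(L+n)}(1+|2^{-\nu}m|)^{-N}\mathfrak{p}_N(\phi),
\]
for any $N$ with $n+\alpha<N<M-L-n$; this range is non-empty precisely because $M>L+2n+2\alpha$. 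Summing against the coefficient bound and using the lattice estimate $\sum_m(1+|2^{-\nu}m|)^{\alpha-N}\lesssim 2^{\nu n}$, I obtain
\[
\sum_{m\in\Z^n}|\lambda_{\nu,m}|\,|\langle\mu_{\nu,m},\phi\rangle|\;\lesssim\;2^{\nu(n/\inf u -\alpha_1 - L)}\mathfrak{p}_N(\phi)\norm{\lambda}{\mubwpinfx}.
\]
The hypothesis $L>-\alpha_1+n/\inf u$ makes the exponent strictly negative, so summing in $\nu\geq1$ (the $\nu=0$ term is handled separately without moment conditions, using only decay of $\mu_{0,m}$ and $\phi$) produces a geometric bound. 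This gives $\SSn$-convergence of $\sum_\nu g_\nu$ and, since the double series is absolutely convergent in the pairing with every $\phi\in\Sn$, convergence of the unordered sum $\sum_{(\nu,m)\in\N_0\times\Z^n}\lambda_{\nu,m}\mu_{\nu,m}$ in $\SSn$ to the same limit.

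\textbf{Main obstacle.} The delicate point is the bookkeeping in Step 1: a naive use of the weight condition via $w_\nu(0)^{-1}(1+|m|)^\alpha$ leaves an extra $+\alpha$ in the decisive exponent and forces a strictly stronger assumption on $L$ than the stated $L>-\alpha_1+n/\inf u$. Routing the estimate through $w_0$ and pairing $(1+|2^{-\nu}m|)^\alpha$ with the Taylor-decay factor $(1+|2^{-\nu}m|)^{-N}$ in the lattice sum is what makes the exponents balance; everything else is standard decay/cancellation accounting.
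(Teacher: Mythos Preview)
Your proof is correct and follows the same overall architecture as the paper's: the coefficient bound via the Morrey quasi-norm and Lemma~\ref{lem:xinorm}, pointwise/$\SSn$ convergence of the inner sum, Taylor expansion combined with the moment conditions for the outer sum, and absolute summability for the unordered sum.

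There is, however, a genuine technical difference in how the weight and the Schwartz decay are organized. The paper uses the estimate $1\lesssim 2^{-\nu\alpha_1}w_\nu(2^{-\nu}m)(1+|x|)^\alpha(1+2^\nu|x-2^{-\nu}m|)^\alpha$ (two $\alpha$-factors, anchored at $x$), then applies $(1+|\xi|)^{-\kappa}\le(1+|x|)^{-\kappa}(1+2^\nu|x-2^{-\nu}m|)^\kappa$, sums over $m$ first via the uniform lattice bound $\sum_m(1+|2^\nu x-m|)^{-\rho}\lesssim 1$, and integrates in $x$ last. You instead route the weight through $w_0(2^{-\nu}m)$, obtaining a single factor $(1+|2^{-\nu}m|)^\alpha$, anchor the Schwartz decay at the centre $2^{-\nu}m$, integrate in $x$ first (picking up $2^{-\nu(L+n)}$ from the substitution), and only then sum over $m$ via the Riemann-sum estimate $\sum_m(1+|2^{-\nu}m|)^{\alpha-N}\lesssim 2^{\nu n}$. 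Both arrangements are standard; your version is marginally more economical, since your interval $n+\alpha<N<M-L-n$ is non-empty already when $M>L+2n+\alpha$, whereas the paper's bookkeeping genuinely consumes the full hypothesis $M>L+2n+2\alpha$. Either way, the decisive exponent in $\nu$ is $n/\inf u-\alpha_1-L$, exactly as required.
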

\begin{proof}
\underline{First step:} We start by proving an important inequality, which we need in the sequel. We have, for any $\nu\in\N_0$ and $m\in\Z^n$ and with the help of Lemma \ref{lem:xinorm},
\begin{align*}
\norm{\lambda_{\nu,m}w_\nu(2^{-\nu}m)\chi_{\nu,m}}{\Mup} = |\lambda_{\nu,m}|w_\nu(2^{-\nu}m)\norm{\chi_{\nu,m}}{\Mup} &\\
&\hspace{-28em}=|\lambda_{\nu,m}|w_\nu(2^{-\nu}m)\sup_{x\in\Rn,r>0}r^{n\left(\frac1{u(x)}-\frac1{p(x)}\right)}\norm{\chi_\num}{L_\p(B_r(x))}\\
&\hspace{-28em}\geq|\lambda_{\nu,m}|w_\nu(2^{-\nu}m)2^{(-\nu-1) n\left(\frac1{u(2^{-\nu}m)}-\frac1{p(2^{-\nu}m)}\right)}\norm{\chi_{B_{2^{-\nu-1}}(2^{-\nu}m)}}{L_\p(\Rn)}\\
&\hspace{-28em}\approx |\lambda_{\nu,m}|w_\nu(2^{-\nu}m) 2^{(-\nu-1) n\left(\frac1{u(2^{-\nu}m)}-\frac1{p(2^{-\nu}m)}\right)}2^{(-\nu-1) \frac n{p(2^{-\nu}m)}}\\
&\hspace{-28em}\gtrsim |\lambda_{\nu,m}|w_\nu(2^{-\nu}m) 2^{-\nu \frac n{\inf u}},
\end{align*}\\
therefore
\begin{align}
|\lambda_{\nu,m}|w_\nu(2^{-\nu}m) &\lesssim 2^{\nu \frac n{\inf u}} \norm{\lambda_{\nu,m}w_\nu(2^{-\nu}m)\chi_{\nu,m}}{\Mup} \notag \\
&\leq 2^{\nu \frac n{\inf u}} \norm{\lambda}{\mubwpinfx}.\label{eq:lambda}
\end{align}

\smallskip

\underline{Second step:} We show the convergence of the inner sum in \eqref{eq:fsum} both pointwisely a.e. and in $\SSn$ (to the same distribution). Essentially, we only have to repeat the arguments of \cite[7.1, Step 1]{AC16b} with $\nu$ instead of $j$ and using estimate \eqref{eq:lambda} instead of \cite[(7.1) and (2.2)]{AC16b}. So instead of \cite[(7.2)]{AC16b} one gets here for any integer $\kappa > \alpha+n$ using $M>\alpha+n$
\begin{equation}
\int_\Rn \sum_{m\in\Z^n} |\lambda_{\nu,m}\mu_{\nu,m}(x)\phi(x)|\, dx \leq c\, 2^{-\nu (\alpha_1-\frac n{\inf u})} \norm{\lambda}{\mubwpinfx} \mathfrak{p}_{\kappa}(\phi),
\label{eq:4.4*1/2}
\end{equation}
where $c>0$ is independent of $\nu \in \N_0$, $\lambda \in \mubwpinfx$ and $\phi \in \Sn$. In particular, we used the following easy consequence of the properties of the weight sequence
\begin{align}\label{eq:weightestimate}
1\lesssim 2^{-\nu\alpha_1}w_\nu(2^{-\nu}m)(1+|x|)^\alpha(1+2^\nu|x-2^{-\nu}m|)^\alpha,
\end{align}
with the involved constant independent of $x \in \Rn$, $\nu \in \N_0$ and $m \in \Z^n$.

As in \cite[7.1, Step 1]{AC16b}, this shows that the inner sum in \eqref{eq:fsum} is (absolutely) convergent a.e.. The rest follows as in \cite[7.1, Step 1]{AC16b}.

\smallskip

\underline{Third step:} Here we show the convergence in $\SSn$ of the outer  sum in \eqref{eq:fsum}. This follows if we show that there exists $N \in \N$ and $c>0$ such that
\begin{equation}
\sum_{\nu=0}^\infty \left| \int_\Rn \sum_{m\in\Z^n} \lambda_{\nu,m} \mu_{\nu,m}(x)\phi(x)\, dx \right| \leq c\, \mathfrak{p}_N(\phi)
\label{eq:4.4*}
\end{equation}
for all $\phi \in \Sn$.

Consider $\nu \in \N$. By the convergence of the inner sum in $\SSn$ and by the moment conditions \eqref{moment} of $\mu_{\nu,m}$,
\begin{align*}
\left|\int_\Rn\sum_{m\in\Z^n}\lambda_\num\mu_\num(x)\phi(x)dx\right|&\\
&\hspace{-10em}\leq \sum_{m\in\Z^n}|\lambda_\num| \left|\int_\Rn \mu_\num(x)\Big(\phi(x)-\sum_{|\beta|<L}\frac{D^\beta\phi(2^{-\nu}m)}{\beta!}(x-2^{-\nu}m)^\beta\, \Big)dx  \right|\\
&\hspace{-10em}\leq \sum_{m\in\Z^n} |\lambda_\num| \int_\Rn  (1+2^\nu |x-2^{-\nu}m|)^{-M} \sum_{|\beta|=L}\frac{|D^\beta\phi(\xi)|}{\beta!}|x-2^{-\nu}m|^Ldx ,
\end{align*}
where in the last line we have used the controlled decay of $\mu_{\nu,m}$ (cf. Definition \ref{def:molecules}) and Taylor's formula, where $\xi$ lays on the line segment joining $x$ and $2^{-\nu}m$. Now we proceed by using \eqref{eq:weightestimate}, the easy estimate
\begin{equation}
(1+|\xi|)^\kappa (1+2^\nu |x-2^{-\nu}m|)^\kappa \geq (1+|x|)^\kappa
\label{eq:easyestimate}
\end{equation}
for some $\kappa>0$ at our disposal and \eqref{eq:lambda}:
\begin{align}
\left|\int_\Rn\sum_{m\in\Z^n}\lambda_\num\mu_\num(x)\phi(x)dx\right|& \notag \\
&\hspace{-10em} \lesssim \sum_{m\in\Z^n} |\lambda_\num| \int_\Rn (1+2^\nu |x-2^{-\nu}m|)^{-M} 2^{-\nu L} (1+2^\nu |x-2^{-\nu}m|)^{L} \mathfrak{p}_{\max\{\lceil \kappa \rceil,L\}}(\phi) \notag \\ 
&\hspace{-5em} \times (1+|\xi|)^{-\kappa} 2^{-\nu \alpha_1} w_\nu(2^{-\nu}m) (1+|x|)^\alpha (1+2^\nu |x-2^{-\nu}m|)^{\alpha}\, dx \notag \\ 
&\hspace{-10em}\lesssim2^{-\nu (L+\alpha_1-\frac n{\inf u})} \norm{\lambda}{\mubwpinfx} \mathfrak{p}_{\max\{\lceil \kappa \rceil,L\}}(\phi) \notag \\ 
&\hspace{-5em} \times \int_\Rn \sum_{m\in\Z^n}(1+2^\nu|x-2^{-\nu}m|)^{-M+L+\kappa+\alpha}(1+|x|)^{\alpha-\kappa}\, dx\notag\\
&\hspace{-10em}\lesssim2^{-\nu (L+\alpha_1-\frac n{\inf u})} \norm{\lambda}{\mubwpinfx} \mathfrak{p}_{\max\{\lceil \kappa \rceil,L\}}(\phi) ,\label{eq:molconv2}
\end{align}
where in the last line we have chosen $\kappa$ such that $\kappa>\alpha+n$ and $M>L+\kappa+\alpha+n$ and have used the estimate
\begin{align}\label{sum_on_m}
\sum_{m\in\Z^n}(1+2^\nu|x-2^{-\nu}m|)^{-M+L+\kappa+\alpha}&=\sum_{m\in\Z^n}(1+|2^\nu x-m|)^{-M+L+\kappa+\alpha} \notag \\
&\lesssim\sum_{m'\in\Z^n}(1+|m'|)^{-M+L+\kappa+\alpha}<\infty.
\end{align}

From \eqref{eq:molconv2} in the case $\nu \in \N$ and \eqref{eq:4.4*1/2} in the case $\nu=0$ the conclusion \eqref{eq:4.4*} follows easily, due to our hypothesis on $L$.

\smallskip

\underline{Forth step:}
The proof of the last statement of the theorem follows similarly as in \cite[7.1, Step 3]{AC16b}.
\end{proof}
An easy consequence of the considerations above is the following embedding.
\begin{cor}\label{cor:EinSS}
Let $\vek{w}\in\mgk$, $p, q \in \Plog$ and $u \in\mathcal{P}(\Rn)$ with $0<p^-\leq p(x)\leq u(x)\leq \sup u < \infty$ and  $q^-,q^+\in(0,\infty)$. Then it holds
\begin{align*}
\Mufwpqx\hookrightarrow\SSn.
\end{align*}
\end{cor}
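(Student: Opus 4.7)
The plan is to combine the atomic decomposition from Theorem \ref{1st_direction} with the $\SSn$\nobreakdash-convergence estimates that are essentially already contained in the proof of Theorem \ref{convergenceS'}, and to conclude via the trivial embedding $\mufwpqx \hookrightarrow \mubwpinfx$ from Remark \ref{smallen}(ii). Concretely, given $f\in\Mufwpqx$, I would first fix integers $K\in\N_0$ and $L\in\N_0$ with $L>-\alpha_1+n/\inf u$, together with $d>1$, and apply Theorem \ref{1st_direction} to obtain $[K,L,d]$-atoms $a_{\nu,m}\in\Sn$ and coefficients $\lambda(f)\in\mufwpqx$ with
$$f=\sum_{\nu=0}^\infty\sum_{m\in\Z^n}\lambda_{\nu,m}(f)\,a_{\nu,m}\ \text{ in }\SSn,\qquad \norm{\lambda(f)}{\mufwpqx}\lesssim\norm{f}{\Mufwpqx}.$$

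Next, as observed in the last sentence of the proof of Theorem \ref{1st_direction} (via \cite[Remark 4.3]{AC16b}), each $a_{\nu,m}$ is, up to a harmless constant depending only on $d$ and $n$, a $[K,L,M]$-molecule concentrated near $Q_{\nu,m}$, and $M>0$ is at our disposal. I would choose $M>L+2n+2\alpha$, so that the hypotheses of Theorem \ref{convergenceS'} are in force.

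The key point is then to evaluate $\langle f,\phi\rangle$ for arbitrary $\phi\in\Sn$ by inserting the atomic representation. The convergence of the outer sum in $\SSn$ justifies
$$\langle f,\phi\rangle=\sum_{\nu=0}^\infty\int_{\Rn}\sum_{m\in\Z^n}\lambda_{\nu,m}(f)\,a_{\nu,m}(x)\,\phi(x)\,dx,$$
and the estimates \eqref{eq:4.4*1/2} (for $\nu=0$) and \eqref{eq:molconv2} (for $\nu\geq 1$) from the proof of Theorem \ref{convergenceS'} give, for some $N\in\N$ depending only on $n,\alpha,L$,
$$\left|\int_{\Rn}\sum_{m\in\Z^n}\lambda_{\nu,m}(f)\,a_{\nu,m}(x)\,\phi(x)\,dx\right|\lesssim 2^{-\nu(L+\alpha_1-n/\inf u)}\,\norm{\lambda(f)}{\mubwpinfx}\,\mathfrak{p}_N(\phi).$$
Because $L+\alpha_1-n/\inf u>0$, summing the geometric series in $\nu$ yields $|\langle f,\phi\rangle|\lesssim\norm{\lambda(f)}{\mubwpinfx}\mathfrak{p}_N(\phi)$.

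Finally, the embedding $\mufwpqx\hookrightarrow\mubwpinfx$ from Remark \ref{smallen}(ii) together with the norm bound from Theorem \ref{1st_direction} gives $\norm{\lambda(f)}{\mubwpinfx}\lesssim\norm{f}{\Mufwpqx}$, and hence
$$|\langle f,\phi\rangle|\lesssim\norm{f}{\Mufwpqx}\,\mathfrak{p}_N(\phi),\qquad\phi\in\Sn,$$
which is exactly the continuity of the inclusion into $\SSn$. There is no real obstacle here: all the machinery is in place, and the only technical care needed is to verify that the constants appearing in the estimates borrowed from the proof of Theorem \ref{convergenceS'} are independent of $f$ and $\phi$, which is transparent from how they were derived (they depend only on $\alpha_1,\alpha,n,K,L,M$ and the molecular structure).
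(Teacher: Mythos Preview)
Your proposal is correct and follows essentially the same route as the paper's own proof: both apply Theorem \ref{1st_direction} to obtain an atomic decomposition, rescale the atoms into $[K,L,M]$-molecules with $L>-\alpha_1+n/\inf u$ and $M>L+2n+2\alpha$, invoke the estimates \eqref{eq:4.4*1/2} and \eqref{eq:molconv2} from the proof of Theorem \ref{convergenceS'}, sum the resulting geometric series, and finish via the embedding $\mufwpqx\hookrightarrow\mubwpinfx$ of Remark \ref{smallen}(ii). The only cosmetic difference is that the paper packages the final estimate by citing ``the arguments in the third step'' of Theorem \ref{convergenceS'} (yielding \eqref{eq:thirdstepest}), whereas you spell out the geometric-series summation explicitly; also note that the rescaling constant $(1+d\sqrt{n}/2)^{-M}$ depends on $M$ as well as on $d$ and $n$, though this is of course harmless since $M$ is fixed.
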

\begin{proof}
Given any $\phi\in\Sn$ we want to prove that there exists a $c_\phi>0$ such that
\begin{align*}
|\langle f,\phi\rangle|\leq c_\phi\norm{f}{\Mufwpqx}\quad\text{for any }f\in\Mufwpqx.
\end{align*}
Let $K,L,M$ be as in Theorem \ref{convergenceS'} and consider arbitrary $\phi\in\Sn$ and $f\in\Mufwpqx$. Use such $K,L$ (and some $d$) in Theorem \ref{1st_direction} and write, as there and for appropriate coefficients and atoms,
\begin{align}\label{eq:fsum(f)}
f=\sum_{\nu=0}^\infty\sum_{m\in\Z^n}\lambda_{\nu,m}(f)\, a_{\nu,m},\quad\text{convergence in }\SSn
\intertext{with}
\norm{\lambda(f)}{\mufwpqx}\leq c_1\norm{f}{\Mufwpqx},\label{eq:seqtofs}
\end{align}
where $c_1>0$ is independent of $f$. Set $\mu_{\nu,m}:=(1+d\sqrt{n}/2)^{-M}a_{\nu,m}$, thus obtaining $[K,L,M]$-molecules concentrated near $Q_{\nu,m}$. Using these molecules and the above coefficients $\lambda(f)$ in Theorem \ref{convergenceS'}, we get from the arguments in the third step of its proof and from Remark \ref{smallen}(ii) that 
\begin{align}
\label{eq:thirdstepest}
\sum_{\nu=0}^\infty\left|\int_\Rn\sum_{m\in\Z^n}\lambda_{\nu,m}(f)\mu_{\nu,m}(x)\phi(x)dx\right| &\leq c_2\norm{\lambda(f)}{\mubwpinfx} \mathfrak{p}_N(\phi) \nonumber \\
&\leq c_3\norm{\lambda(f)}{\mufwpqx} \mathfrak{p}_N(\phi),
\end{align}
where $c_3>0$ and $N \in \N$ are independent of $\lambda(f)$ and $\phi$. Now, we use the convergence of \eqref{eq:fsum(f)} in $\SSn$ and also the fact that the inner sum converges in $\SSn$ to the corresponding pointwise sum and obtain from \eqref{eq:thirdstepest} and \eqref{eq:seqtofs} that
\begin{align*}
|\langle f,\phi\rangle|&\leq\sum_{\nu=0}^\infty\left|\int_\Rn\sum_{m\in\Z^n}\lambda_{\nu,m}(f)a_{\nu,m}(x)\phi(x)dx\right|\\
&=(1+d\sqrt{n}/2)^M\sum_{\nu=0}^\infty\left|\int_\Rn\sum_{m\in\Z^n}\lambda_{\nu,m}(f)\mu_{\nu,m}(x)\phi(x)dx\right|\\
&\leq c_3(1+d\sqrt{n}/2)^M\norm{\lambda(f)}{\mufwpqx} \mathfrak{p}_N(\phi)\\
&\leq c_1c_3 (1+d\sqrt{n}/2)^M\mathfrak{p}_N(\phi)\norm{\lambda(f)}{\Mufwpqx},
\end{align*}
which gives the desired estimate with $c_\phi=c_1c_3 (1+d\sqrt{n}/2)^M\mathfrak{p}_N(\phi)$.
\end{proof}

Now we can prove the completeness of $\Mufwpqx$ by using standard arguments. 

\begin{cor} \label{cor:completeness}
The spaces $\Mufwpqx$ according to Definition \ref{def:TLM} are complete.
\end{cor}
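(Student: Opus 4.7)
The plan is to follow the classical recipe: Cauchy-in-the-space $\Rightarrow$ Cauchy-in-$\SSn$ (via the just-proved embedding) $\Rightarrow$ existence of a tempered distributional limit $f$ $\Rightarrow$ lower semicontinuity of $\|\cdot\|_{\Mufwpqx}$ yields $f \in \Mufwpqx$ and $f_k \to f$ in quasi-norm.

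First I would take an arbitrary Cauchy sequence $(f_k)_{k\in\N}$ in $\Mufwpqx$. By Corollary \ref{cor:EinSS}, the embedding $\Mufwpqx \hookrightarrow \SSn$ is continuous, so $(f_k)$ is Cauchy in $\SSn$. Since $\SSn$ (endowed with the weak topology, which coincides with the strong one on sequences in this context) is complete, there exists $f \in \SSn$ with $f_k \to f$ in $\SSn$. In particular, for every $j \in \N_0$ we have $\varphi_j \hat{f}_k \to \varphi_j \hat{f}$ in $\Sn$, hence $(\varphi_j \hat{f}_k)^\vee \to (\varphi_j \hat{f})^\vee$ in $\Sn$, which in particular implies pointwise convergence on $\Rn$.

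Next I would apply a Fatou-type lower semicontinuity argument to the Morrey $\ell_q$ quasi-norm $\|\cdot\|_{\Muplq}$. Fix $\varepsilon>0$ and choose $k_0$ such that $\|f_k - f_\ell\|_{\Mufwpqx} < \varepsilon$ for all $k,\ell \geq k_0$. For fixed $k\geq k_0$ and each $j \in \N_0$ we have $w_j |(\varphi_j(\hat{f}_k-\hat{f}_\ell))^\vee| \to w_j |(\varphi_j(\hat{f}_k-\hat{f}))^\vee|$ pointwise as $\ell \to \infty$, so by Fatou's lemma applied successively inside the $\ell_q$-sum, inside the $L_\p$-modular, and inside the supremum over balls defining $\|\cdot\|_{\Mup}$ (using \eqref{eq:convmodular} and Definition \ref{dfn:Morrey}), we obtain
\begin{align*}
\|f_k - f\|_{\Mufwpqx} &= \bigl\|\bigl(w_j(\varphi_j(\hat{f}_k-\hat{f}))^\vee\bigr)_j\bigr\|_{\Muplq} \\
&\leq \liminf_{\ell\to\infty} \bigl\|\bigl(w_j(\varphi_j(\hat{f}_k-\hat{f}_\ell))^\vee\bigr)_j\bigr\|_{\Muplq} = \liminf_{\ell\to\infty}\|f_k - f_\ell\|_{\Mufwpqx} \leq \varepsilon.
\end{align*}
Hence $f_k - f \in \Mufwpqx$ for $k \geq k_0$, and since $\Mufwpqx$ is a quasi-normed linear space (Remark \ref{Eelemprop}(i)) we conclude $f = f_{k_0} - (f_{k_0}-f) \in \Mufwpqx$; moreover the above inequality shows $f_k \to f$ in $\Mufwpqx$.

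The only point that requires genuine care is the lower semicontinuity step: one has to chain three levels of Fatou, one for the $\ell_q$-sum over $j$, one for the modular integral defining $\|\cdot\|_{L_\p(B_r(x))}$, and one when taking the supremum over $(x,r)$ that defines the Morrey norm. The first two are straightforward applications of the monotone/Fatou lemma (using the definition of $\|\cdot\|_{L_\p}$ through the modular $\varrho_\p$), and the supremum over $(x,r)$ only improves matters since $\liminf$ commutes with supremum from below; no additional regularity of $p,q,u$ is needed beyond what is already assumed.
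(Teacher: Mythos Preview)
Your approach is essentially the same as the paper's: use Corollary~\ref{cor:EinSS} to obtain a limit $f\in\SSn$, then invoke Fatou-type lower semicontinuity to bound $\|f_k-f\|_{\Mufwpqx}$ by $\liminf_\ell\|f_k-f_\ell\|_{\Mufwpqx}$. The paper organizes the Fatou step slightly differently---it first truncates the $j$-sum at a finite $J$, applies Fatou for $L_\p$, then lets $J\to\infty$ via a second application of Fatou---but your three-layer version (series, then $L_\p$-modular, then supremum over balls) is equally valid.

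One small slip worth fixing: the assertion that $(\varphi_j\hat f_k)^\vee\to(\varphi_j\hat f)^\vee$ ``in $\Sn$'' is not correct as stated, since these functions are in general only smooth of polynomial growth, not Schwartz. What you actually need---and what does hold---is \emph{pointwise} convergence, which follows directly from the identity $(\varphi_j\hat g)^\vee(x)=\langle g,\varphi_j^\vee(x-\cdot)\rangle$ together with weak-$*$ convergence $f_k\to f$ in $\SSn$. With this correction your argument goes through.
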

\begin{proof}
Let $(f_m)_{m\in\N}$ be a Cauchy sequence in $\Mufwpqx$. By the previous corollary and the completeness of $\SSn$, there exists $f\in\SSn$ such that $\lim_{m \to \infty}f_m = f$ in $\SSn$.

Given any $\varepsilon>0$, let $m_0\in\N$ be such that, for $l,m\geq m_0$,
\begin{equation}
\sup_{x\in\Rn,r>0}r^{n\left(\frac1{u(x)}-\frac1{p(x)}\right)}\norm{\left(\sum_{j=0}^\infty|w_j(\varphi_j\widehat{f_l-f_m})^\vee|^\q\right)^{1/\q}\!\!\!\!\!\! \chi_{B_r(x)}}{L_{p(\cdot)}(\Rn)} < \varepsilon.
\label{eq:Cauchy}
\end{equation}

Clearly, given $m\geq m_0$, $x\in\Rn$, $r>0$ and $J\in \N$, we have, pointwisely,
$$\left(\sum_{j=0}^J|w_j(\varphi_j\widehat{f_l-f_m})^\vee|^\q\right)^{1/\q}\!\!\!\!\!\! \chi_{B_r(x)} \;\; {\mathop {\longrightarrow}_{l \rightarrow \infty}\;\;} \left(\sum_{j=0}^J|w_j(\varphi_j\widehat{f-f_m})^\vee|^\q\right)^{1/\q}\!\!\!\!\!\! \chi_{B_r(x)}.$$
On the other hand, from \eqref{eq:Cauchy} and the lattice property of $\Lp$, for $l\geq m_0$ the $\Lp$-quasi-norm of the left-hand side above is bounded above by $r^{n\left(\frac1{p(x)}-\frac1{u(x)}\right)} \varepsilon$. Since $\Lp$ satisfies Fatou's lemma (see \cite[after Lemma 3.2.10]{DHHR} for the case $p^-\geq 1$ and play with the property \eqref{eq:Lp/t} to extend it to all values of $p$), then also
$$\norm{\left(\sum_{j=0}^J|w_j(\varphi_j\widehat{f-f_m})^\vee|^\q\right)^{1/\q}\!\!\!\!\!\! \chi_{B_r(x)}}{\Lp} \leq r^{n\left(\frac1{p(x)}-\frac1{u(x)}\right)} \varepsilon.$$
Applying again the just mentioned Fatou's lemma, but now considering $J\to \infty$, we get the above inequality with $J$ replaced by $\infty$, and finally, multiplying both members by $r^{n\left(\frac1{u(x)}-\frac1{p(x)}\right)}$ and applying the supremum for all $x \in \Rn$ and $r>0$, we get $\norm{f-f_m}{\Mufwpqx} \leq \varepsilon$.

So $f=(f-f_m)+f_m \in \Mufwpqx$ and $\lim_{m \to \infty}f_m = f$ in $\Mufwpqx$.

\end{proof}
In some cases it is possible to weaken the hypothesis on $L$ in Theorem \ref{convergenceS'}, at the expense of strengthening the hypothesis on $M$. As a preparatory result, we first prove some kind of Sobolev embedding for the sequence spaces.

\begin{lem} \label{lem:sobolev}
Let $\vek{w}^0\in\mgk$, $p_0, p_1 \in \Plog$ with $p_0 \leq p_1$ and $q, u_0, u_1 \in\mathcal{P}(\Rn)$ with $p_0 \leq u_0$ and
\begin{equation}
\frac{1}{u_0(x)}-\frac{1}{p_0(x)} = \frac{1}{u_1(x)}-\frac{1}{p_1(x)}, \quad x \in \Rn.
\label{eq:sobolev1}
\end{equation}
Let $\vek{w}^1$ be defined by
\begin{equation}
w^1_\nu(x)=w^0_\nu(x) 2^{-\nu n\left(\frac{1}{p_0(x)}-\frac{1}{p_1(x)}\right)}, \quad x \in \Rn, \quad \nu \in \N_0.
\label{eq:sobolev2}
\end{equation}
Then
\begin{equation}
\mathit{n}^{\vek{w}^0,u_0(\cdot)}_{p_0(\cdot),\infty} \hookrightarrow \mathit{n}^{\vek{w}^1,u_1(\cdot)}_{p_1(\cdot),\infty}.
\label{eq:sobolev3}
\end{equation}

\end{lem}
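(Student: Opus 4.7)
The plan is to prove, for each $\nu\in\N_0$, the per-level estimate
$$\Big\|\sum_{m\in\Zn}w^1_\nu(2^{-\nu}m)\lambda_{\nu,m}\chi_{\nu,m}\Big\|_{M^{u_1(\cdot)}_{p_1(\cdot)}(\Rn)}\lesssim\Big\|\sum_{m\in\Zn}w^0_\nu(2^{-\nu}m)\lambda_{\nu,m}\chi_{\nu,m}\Big\|_{M^{u_0(\cdot)}_{p_0(\cdot)}(\Rn)}$$
with implicit constant independent of $\nu$; passing to the supremum in $\nu$ will then yield \eqref{eq:sobolev3}. Writing $a^i_m:=w^i_\nu(2^{-\nu}m)\lambda_{\nu,m}$, relation \eqref{eq:sobolev2} gives $a^1_m=a^0_m\beta_m$ with $\beta_m:=2^{-\nu n(1/p_0-1/p_1)(2^{-\nu}m)}\leq 1$, using $p_0\leq p_1$.

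First I would use Lemma \ref{lem:lem3.7} to replace both Morrey quasi-norms by their dyadic-cube equivalents, and then Lemma \ref{lem:lem3.9}, applied to the piecewise-constant functions above, to restrict the relevant suprema to dyadic levels $j\leq\nu$. The decisive observation is that the Sobolev relation \eqref{eq:sobolev1} makes the Morrey prefactor $2^{-jn(1/u_i(x)-1/p_i(x))}$ the \emph{same} for $i=0,1$, so it cancels. The problem therefore reduces to showing, for every $j\leq\nu$ and every admissible $x\in\Rn$, $k\in\Zn$,
$$\Big\|\sum_{m\in T}a^1_m\chi_{\nu,m}\Big\|_{L_{p_1(\cdot)}(Q_{j,k})}\lesssim\Big\|\sum_{m\in T}a^0_m\chi_{\nu,m}\Big\|_{L_{p_0(\cdot)}(Q_{j,k})},$$
where $T:=\{m\in\Zn:Q_{\nu,m}\subset Q_{j,k}\}$.

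For this I would argue via modulars. Setting $\rho$ equal to the right-hand side and exploiting that $Q_{\nu,m}\subset Q_{j,k}$ for every $m\in T$, monotonicity of the $L_{p_0(\cdot)}$-quasi-norm together with Lemma \ref{lem:xinorm} will yield the a priori bound
$$|a^0_m|/\rho\lesssim 2^{\nu n/p_0(2^{-\nu}m)}\quad(m\in T).$$
The heart of the argument is then the pointwise estimate
$$(|a^1_m|/\rho)^{p_1(y)}\lesssim(|a^0_m|/\rho)^{p_0(y)}\quad\text{for every }y\in Q_{\nu,m},\ m\in T.$$
In the case $|a^0_m|\leq\rho$ this is immediate from $\beta_m\leq 1$ and $p_1\geq p_0$ (bases $\leq 1$ are decreasing in the exponent). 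In the case $|a^0_m|>\rho$ I write the left-hand side as $(|a^0_m|/\rho)^{p_0(y)}\cdot\beta_m^{p_1(y)}(|a^0_m|/\rho)^{p_1(y)-p_0(y)}$ and check, using the displayed a priori bound, that the residual factor is of order $2^{-\nu n(p_1/p_0-1)(2^{-\nu}m)}\cdot 2^{\nu n(p_1/p_0-1)(2^{-\nu}m)}=1$. Summing over $m\in T$ and integrating over $Q_{j,k}$ then gives $\varrho_{p_1(\cdot)}\bigl(\sum_{m\in T}a^1_m\chi_{\nu,m}/\rho\bigr)\lesssim 1$, from which the desired $L_{p_1(\cdot)}$-quasi-norm bound follows by comparing modular and quasi-norm.

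The hard part will be that pointwise estimate in the case $|a^0_m|>\rho$: one has to transfer the values of $p_0$ and $p_1$ from the cube center $2^{-\nu}m$ to a general point $y\in Q_{\nu,m}$ while these values sit in exponents multiplied by $\nu n$. It is precisely the hypothesis $p_0,p_1\in\Plog$ that keeps the resulting correction factors of the form $2^{\nu n\,O(1/\log(e+2^\nu))}$ uniformly bounded in $\nu$, and this bookkeeping will require some care; everything else reduces to routine use of results already established in the paper.
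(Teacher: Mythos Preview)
Your proposal is correct, and the overall scaffolding matches the paper's proof exactly: both arguments use Lemmas~\ref{lem:lem3.7} and~\ref{lem:lem3.9} to reduce to dyadic cubes $Q_{j,k}$ with $j\le\nu$, and both exploit \eqref{eq:sobolev1} to see that the Morrey prefactor $2^{-jn(1/u_i(x)-1/p_i(x))}$ is the same for $i=0,1$ and hence drops out. The difference is in the remaining core estimate
\[
\Big\|\sum_{m\in T}w^1_\nu(2^{-\nu}m)\lambda_{\nu,m}\chi_{\nu,m}\Big\|_{L_{p_1(\cdot)}(Q_{j,k})}
\lesssim
\Big\|\sum_{m\in T}w^0_\nu(2^{-\nu}m)\lambda_{\nu,m}\chi_{\nu,m}\Big\|_{L_{p_0(\cdot)}(Q_{j,k})}.
\]
The paper does \emph{not} prove this from scratch: it introduces the truncated sequence $\gamma^{j,k}$, passes to the $b^{\vek w}_{p(\cdot),\infty}$-sequence-space norm (a sup over all levels $\mu$), and then invokes the known Sobolev embedding $b^{\vek w^0}_{p_0(\cdot),\infty}\hookrightarrow b^{\vek w^1}_{p_1(\cdot),\infty}$ from \cite[Prop.~3.9]{Kempka10}. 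You instead attack the displayed inequality directly, level by level, via a modular argument (the a~priori bound from Lemma~\ref{lem:xinorm} plus the pointwise comparison $(|a^1_m|/\rho)^{p_1(y)}\lesssim(|a^0_m|/\rho)^{p_0(y)}$, with $\log$-H\"older continuity controlling the exponent shifts). Your route is more self-contained and arguably cleaner, since it avoids the detour through the auxiliary $b$-space; the paper's route is shorter on the page because it outsources precisely this computation. In effect you are reproving the relevant special case of \cite[Prop.~3.9]{Kempka10} inside the argument.

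One technical caveat: in Case~2 your residual factor leads to $C^{p_1(y)-p_0(y)}$ and an exponent $\nu n\bigl(p_1(y)/p_1(2^{-\nu}m)-p_0(y)/p_0(2^{-\nu}m)\bigr)$, and bounding these uniformly uses $p_0^+,p_1^+<\infty$. This is not listed among the hypotheses of the lemma, but it is satisfied in every application in the paper (and the cited result in \cite{Kempka10} operates under the same constraint).
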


\begin{proof}
By \eqref{eq:sobolev1} and the hypothesis $p_0\leq u_0$, it is clear that also $p_1 \leq u_1$. On the other hand, from the fact that $\vek{w}^0$ is an admissible weight sequence and the definition \eqref{eq:sobolev2} it is not difficult to see that $\vek{w}^1$ is also an admissible weight sequence (possibly for different parameters). So, both spaces in \eqref{eq:sobolev3} are well defined.

Observe that, by Lemmas \ref{lem:lem3.7} and \ref{lem:lem3.9} and hypothesis \eqref{eq:sobolev1}, we have that
\begin{equation}
\norm{\lambda}{\mathit{n}^{\vek{w^1},u_1(\cdot)}_{p_1(\cdot),\infty}} \approx \sup_{\nu\in\N_0} \sup_{j(\leq \nu),x,k} 2^{-jn\left(\frac{1}{u_0(x)}-\frac{1}{p_0(x)}\right)} \norm{\sum_{m\in\Zn} w^1_\nu(2^{-\nu}m) \lambda_\num \chi_\num \chi_{Q_{j,k}}}{L_{p_1(\cdot)}(\Rn)},
\label{eq:reduction}
\end{equation}
where the inner supremum runs over all $j\in\Z$ with $j\leq \nu$, $x\in \Rn$ and $k\in \Zn$ with $|x-2^{-j}k|_\infty\leq \frac{3}{2}2^{-j}$. Given $\nu\in\N_0$ and such $j$, $x$ and $k$, by the dyadic structure we have either $Q_\num \subset Q_{j,k}$ or $Q_\num \cap \overset{\circ}{Q}_{j,k} = \emptyset$. Define
$$\gamma_{\mu,m}^{j,k} := \left\{  \begin{array}{ll}
	\lambda_{\mu,m} & \mbox{ if } Q_{\mu,m} \subset Q_{j,k} \\[2mm]
	0 & \mbox{ otherwise },
	\end{array} \right., \quad \mu\in\N_0,\; m\in\Zn,$$
and notice that
$$\gamma_{\mu,m}^{j,k} \chi_{\mu,m} = \lambda_{\mu,m} \chi_{\mu,m} \chi_{\overset{\circ}{Q}_{j,k}} \; \mbox{ for } \;\mu \geq j.$$
We have then, also with the help of \cite[Prop. 3.9]{Kempka10}, that
\begin{eqnarray*}
\lefteqn{\norm{\sum_{m\in\Zn} w^1_\nu(2^{-\nu}m) \lambda_\num \chi_\num \chi_{Q_{j,k}}}{L_{p_1(\cdot)}(\Rn)}} \\
& \leq & \sup_{\mu\in\N_0} \norm{\sum_{m\in\Zn} w^1_\mu(2^{-\mu}m) \gamma_{\mu,m}^{j,k} \chi_{\mu,m}}{L_{p_1(\cdot)}(\Rn)} 
 =  \norm{\gamma^{j,k}}{\mathit{b}^{\vek{w^1}}_{p_1(\cdot),\infty}}\\
& \lesssim&  \norm{\gamma^{j,k}}{\mathit{b}^{\vek{w^0}}_{p_0(\cdot),\infty}}  =  \sup_{\mu \geq j} \norm{\sum_{m\in\Zn} w^0_\mu(2^{-\mu}m) \gamma_{\mu,m}^{j,k} \chi_{\mu,m}}{L_{p_0(\cdot)}(\Rn)} \\
& \leq & \sup_{\mu\in\N_0} \norm{\sum_{m\in\Zn} w^0_\mu(2^{-\mu}m) \lambda_{\mu,m} \chi_{\mu,m}}{L_{p_0(\cdot)}(Q_{j,k})}.
\end{eqnarray*}
Resuming from \eqref{eq:reduction} and taking the above and Lemma \ref{lem:lem3.7} into consideration we finally get, with $\sup_{j,x,k}$ meaning supremum running over all $j\in\Z$, $x\in \Rn$ and $k\in \Zn$ with $|x-2^{-j}k|_\infty\leq \frac{3}{2}2^{-j}$,
\begin{eqnarray*}
\norm{\lambda}{\mathit{n}^{\vek{w^1},u_1(\cdot)}_{p_1(\cdot),\infty}} & \lesssim & \sup_{j,x,k} 2^{-jn\left(\frac{1}{u_0(x)}-\frac{1}{p_0(x)}\right)} \sup_{\mu\in\N_0} \norm{\sum_{m\in\Zn} w^0_\mu(2^{-\mu}m) \lambda_{\mu,m} \chi_{\mu,m}}{L_{p_0(\cdot)}(Q_{j,k})} \\
& = & \sup_{\mu\in\N_0} \sup_{j,x,k} 2^{-jn\left(\frac{1}{u_0(x)}-\frac{1}{p_0(x)}\right)} \norm{\sum_{m\in\Zn} w^0_\mu(2^{-\mu}m) \lambda_{\mu,m} \chi_{\mu,m}}{L_{p_0(\cdot)}(Q_{j,k})} \\
& \approx & \norm{\lambda}{\mathit{n}^{\vek{w^0},u_0(\cdot)}_{p_0(\cdot),\infty}}.
\end{eqnarray*}
The interchange of the suprema above is possible, since they are taken with respect to distinct sets of parameters.
\end{proof}

\begin{rem}\label{rem:p0p1u0}
Given $p_0, p_1, u_0$ as in the lemma above, it is not always possible to find $u_1 \in \mathcal{P}(\Rn)$ with \eqref{eq:sobolev1}. That is, there are times when the lemma cannot be used. We can always define an extended real-valued measurable function $u_1$ in $\Rn$ by means of the following equation equivalent to \eqref{eq:sobolev1}:
\begin{equation}
\frac{1}{u_1(x)} = \frac{1}{u_0(x)}-\frac{1}{p_0(x)} +\frac{1}{p_1(x)}, \quad x \in \Rn.
\label{eq:sobolev4}
\end{equation}
\noindent Actually, if there is an $u_1$ satisfying the lemma it should be defined in this way. However, such an $u_1$ satisfies the lemma if and only if the right-hand side of \eqref{eq:sobolev4} is non-negative for every $x \in \Rn$.
\end{rem}
\begin{rem} \label{rem:sobolevA}
Since later on we would like to apply the above lemma when $p_0^- \leq 1$ and $p_1(x)=\frac{p_0(x)}{t}$, for some choice of $t \in (0,p_0^-)$, let us explore a little bit the implications of the necessary and sufficient condition for the existence of $u_1$ according to the lemma, following the considerations from Remark \ref{rem:p0p1u0}. In this particular case we have
$$\frac{1}{u_0(x)}-\frac{1}{p_0(x)} +\frac{t}{p_0(x)} \geq 0, \quad x \in \Rn,$$
\noindent and it is not difficult to see that a choice of $t \in (0,p_0^-)$ is possible if and only if
\begin{equation}
\sup_{x \in \Rn} \left(1-\frac{p_0(x)}{u_0(x)}\right) < p_0^-.
\label{eq:sobolev5}
\end{equation}
Here we interpreted $\frac{\infty}{\infty}$ to be $1$. Then $t$ must necessarily be in $\displaystyle\left[ \sup_{x \in \Rn} \left(1-\frac{p_0(x)}{u_0(x)}\right),p_0^- \right)$ and any $t$ in the interior of such an interval will do.
\end{rem}
\begin{rem} \label{rem:sobolevB}
We would also like to remark that under the mere conditions $p_0 \in \Plog$, $u_0 \in\mathcal{P}(\Rn)$ and $p_0^+<\infty$ one has that \eqref{eq:sobolev5} implies that $\frac{1}{p_0^-}-1 < \frac{1}{\inf u_0}$ when $p_0^- \leq 1$. Note that then necessarily $u_0$ cannot assume the value $\infty$. Further, when $p_0^- > 1$ then condition \eqref{eq:sobolev5} is trivially verified, as well as the condition $0 \leq \frac{1}{\inf u_0}$. Summing up and using the standard notation $\sigma_r := n\left(\frac{1}{\min\{1,r\}}-1\right)$ we have that
\begin{equation}
\sup_{x \in \Rn} \left(1-\frac{p_0(x)}{u_0(x)}\right) < p_0^- \; \Rightarrow \; \sigma_{p_0^-} \leq \frac{n}{\inf u_0},
\label{eq:sobolev6}
\end{equation}
under the conditions $p_0 \in \Plog$, $u_0 \in\mathcal{P}(\Rn)$ and $p_0^+<\infty$. When we further assume that $u_0$ is not identically equal to $\infty$, then \eqref{eq:sobolev6} can be written with both inequalities strict.
\end{rem}

For the next lemma see \cite[Lemma 7.1]{AC16b} and references therein.

\begin{lem}\label{lem:hnum}
Let $j,\nu\in\N_0$, $x\in\Rn$, $0<t\leq1$ and $R>n/t$. Then for all $(h_\num)_m\subset\mathbb{C}$
\begin{align*}
&\sum_{m\in\Z^n}|h_\num|(1+2^{\min(\nu,j)}|x-2^{-\nu}m|)^{-R}\\
&\hspace{5em}\lesssim\max(1,2^{(\nu-j)R})\left(\eta_{\nu,Rt}\ast\left|\sum_{m\in\Z^n}h_\num\chi_\num\right|^t\right)^{1/t}(x),
\end{align*}
where the involved constant is independent of $\nu,j,x$ and $(h_\num)_m$.
\end{lem}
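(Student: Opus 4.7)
The plan is to reduce everything to a pointwise estimate of the $t$-th power, then exploit the dyadic structure and a simple elementary scaling inequality.

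First, since $0<t\leq 1$ and $a^t$ is concave in $a\geq 0$, I would use the standard sub-additivity inequality $(\sum_m b_m)^t\leq \sum_m b_m^t$ (for non-negative $b_m$) in the form
\[
\sum_{m\in\Z^n}|h_\num|(1+2^{\min(\nu,j)}|x-2^{-\nu}m|)^{-R}\leq \left(\sum_{m\in\Z^n}|h_\num|^t(1+2^{\min(\nu,j)}|x-2^{-\nu}m|)^{-Rt}\right)^{1/t}.
\]
This reduces the task to bounding the right-hand side by $\max(1,2^{(\nu-j)R})\left(\eta_{\nu,Rt}*|\sum_m h_\num\chi_\num|^t\right)^{1/t}(x)$.

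Next, since the interiors of the cubes $Q_{\nu,m}$ are pairwise disjoint and together exhaust $\Rn$ up to a null set, for a.e.\ $y\in Q_\num$ one has
\[
|h_\num|^t=\Bigl|\sum_{m'\in\Z^n}h_{\nu,m'}\chi_{\nu,m'}(y)\Bigr|^t,
\]
and averaging over $Q_\num$ gives
\[
|h_\num|^t=2^{\nu n}\int_{Q_\num}\Bigl|\sum_{m'\in\Z^n}h_{\nu,m'}\chi_{\nu,m'}(y)\Bigr|^t\,dy.
\]
For $y\in Q_\num$ one also has $|y-2^{-\nu}m|\leq\frac{\sqrt n}{2}2^{-\nu}$, hence (using $\min(\nu,j)\leq \nu$) the elementary estimate
\[
(1+2^{\min(\nu,j)}|x-2^{-\nu}m|)^{-Rt}\leq c\,(1+2^{\min(\nu,j)}|x-y|)^{-Rt}
\]
with $c>0$ depending only on $n,R,t$. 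Plugging this in and summing over $m$, the integrals combine over the disjoint cubes into a single integral over $\Rn$:
\[
\sum_m|h_\num|^t(1+2^{\min(\nu,j)}|x-2^{-\nu}m|)^{-Rt}\lesssim 2^{\nu n}\int_{\Rn}\Bigl|\sum_{m'}h_{\nu,m'}\chi_{\nu,m'}(y)\Bigr|^t(1+2^{\min(\nu,j)}|x-y|)^{-Rt}\,dy.
\]

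The main (small) obstacle is bringing the factor $(1+2^{\min(\nu,j)}|x-y|)^{-Rt}$ into the form $(1+2^{\nu}|x-y|)^{-Rt}$ that appears in $\eta_{\nu,Rt}$. For this I split into two cases. If $\nu\leq j$, then $\min(\nu,j)=\nu$ and the integral is exactly $\eta_{\nu,Rt}*|\sum_m h_\num\chi_\num|^t(x)$, while $\max(1,2^{(\nu-j)R})=1$, so the claim follows. If $\nu>j$, write $A=2^\nu|x-y|$ and observe that $(1+A)/(1+A\cdot 2^{j-\nu})\leq 2\cdot 2^{\nu-j}$ (separating the cases $A\leq 1$ and $A>1$), which yields
\[
(1+2^j|x-y|)^{-Rt}\leq c\,2^{(\nu-j)Rt}(1+2^\nu|x-y|)^{-Rt}.
\]
Plugging this in produces the factor $2^{(\nu-j)Rt}=\max(1,2^{(\nu-j)R})^t$.

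Finally, taking $1/t$-th powers of the estimates obtained and combining with the first step yields the claimed inequality, with the constant depending only on $n$, $R$ and $t$, hence independent of $\nu,j,x$ and $(h_\num)_m$. The hypothesis $R>n/t$ does not enter the pointwise calculation itself but ensures that $\eta_{\nu,Rt}\in L_1(\Rn)$, making the convolution on the right a meaningful finite quantity in the intended applications.
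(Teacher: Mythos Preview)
Your argument is correct: the $\ell^t\hookrightarrow\ell^1$ inequality, the averaging over $Q_{\nu,m}$, the triangle-inequality comparison of $(1+2^{\min(\nu,j)}|x-2^{-\nu}m|)$ with $(1+2^{\min(\nu,j)}|x-y|)$ for $y\in Q_{\nu,m}$, and the scaling estimate $(1+2^j|x-y|)^{-1}\leq 2\cdot 2^{\nu-j}(1+2^\nu|x-y|)^{-1}$ are all sound, and the constants assembled depend only on $n$, $R$ and $t$. Note that the paper itself does not prove this lemma but defers to \cite[Lemma~7.1]{AC16b} and the references therein; your proof is essentially the standard one that one finds in that line of references (going back to Frazier--Jawerth type arguments), so there is no substantive difference in approach.
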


We can now state and prove an alternative version of Theorem \ref{convergenceS'} where the hypothesis imposed on $L$ is weaker. In order to make things more readable we introduce the abbreviation 
\begin{equation*}
c_\infty(1/p,1/u,t):=\max\left(0,\sup_{x\in\Rn}\left(\frac1{p(x)}-\frac1{u(x)}\right)-\frac{t}{p_\infty}\right).
\label{eq:cput}
\end{equation*}
So, in particular we have $c_\infty(1/p,1/u,1)=\cpu$ from \eqref{eq:cpu}.

\begin{thm} \label{convergenceS'B}
Let $\vek{w}\in\mgk$, $p \in \Plog$ and $q, u \in\mathcal{P}(\Rn)$ with $0<p^-\leq p(x)\leq u(x)\leq \sup u < \infty$ and $\sup_{x \in \Rn} \left(1-\frac{p(x)}{u(x)}\right) < p^-$. Let $\lambda\in\mubwpinfx$ and $(\mu_{\nu,m})_\num$ be $[K,L,M]$-molecules with 
\begin{align*}
L&>-\alpha_1+\sigma_{p^-}
\intertext{and}
M&>L+2n+2\alpha+2p^-c_{\log}(1/p)\sigma_{p^-}+nc_\infty(1/p,1/u,\min\{1,p^-\}).
\end{align*}
Then 
\begin{align}\label{eq:fsumB}
\sum_{\nu=0}^\infty\sum_{m\in\Z^n}\lambda_{\nu,m}\mu_{\nu,m}\quad\text{converges in }\SSn
\end{align}
and the convergence in $\SSn$ of the inner sum gives the regular distribution obtained by taking the corresponding pointwise convergence. Moreover, the sum 
\begin{align*}
\sum_{(\nu,m)\in\N_0\times\Z^n}\lambda_{\nu,m}\mu_{\nu,m}\quad\text{converges also in }\SSn
\end{align*}
to the same distribution as the iterated sum in \eqref{eq:fsumB}.
\end{thm}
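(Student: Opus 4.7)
My plan is to adapt the proof of Theorem \ref{convergenceS'} by using the Sobolev-type embedding (Lemma \ref{lem:sobolev}) to pass to a sequence space with effective infimum exponent $\geq 1$, and then to replace the crude pointwise bound \eqref{eq:lambda} used in Step 3 of that earlier proof by a finer convolution-type bound delivered by Lemma \ref{lem:hnum}. Setting $t_\ast:=\min\{1,p^-\}$, so that $\sigma_{p^-}=n(1/t_\ast-1)$, the extra hypothesis $\sup_x(1-p/u)<p^-$ together with Remarks \ref{rem:sobolevA} and \ref{rem:sobolevB} permits picking $t$ strictly between $\sup_x(1-p/u)$ and $t_\ast$. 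With $p_1:=p/t$, the corresponding $u_1$ defined by \eqref{eq:sobolev4} and $w^1$ defined by \eqref{eq:sobolev2}, Lemma \ref{lem:sobolev} gives $\lambda\in\mathit{n}^{\vek{w^1},u_1(\cdot)}_{p_1(\cdot),\infty}$ with norm bounded by a constant multiple of $\norm{\lambda}{\mubwpinfx}$, and crucially $p_1^-\geq 1$. The almost-everywhere convergence of the inner sum $\sum_m \lambda_{\nu,m}\mu_{\nu,m}(x)$ and its agreement with the $\SSn$-limit are then inherited essentially verbatim from Step 2 of Theorem \ref{convergenceS'}'s proof, since only $M>\alpha+n$ is required for that step, and this is implied by our hypothesis on $M$.

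For the $\SSn$-convergence of the outer sum I repeat Step 3 of the proof of Theorem \ref{convergenceS'}, reaching (via Taylor expansion exploiting the moment condition $L$, the molecule decay, \eqref{eq:weightestimate} and \eqref{eq:easyestimate}) an estimate in which $|\int\sum_m \lambda_{\nu,m}\mu_{\nu,m}\phi\,dx|$ is controlled by $2^{-\nu L}\,2^{-\nu\alpha_1}\,\mathfrak{p}_{\ast}(\phi)$ times an integral $\int(1+|x|)^{\alpha-\kappa}\sum_m|\lambda_{\nu,m}|w_\nu(2^{-\nu}m)(1+2^\nu|x-2^{-\nu}m|)^{-R}\,dx$ with $R:=M-L-\alpha-\kappa$. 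Rather than extracting the factor $2^{\nu n/\inf u}$ from this inner sum (the move in Theorem \ref{convergenceS'} which forces the strong $L$-hypothesis), I apply Lemma \ref{lem:hnum} with the above $t$ (so $R>n/t$), controlling the inner sum by $(\eta_{\nu,Rt}\ast |F|^t)^{1/t}(x)$ with $F:=\sum_m \lambda_{\nu,m}w_\nu(2^{-\nu}m)\chi_{\nu,m}$. The subsequent spatial integration is then handled by combining Lemma \ref{lem:ttrick} with a Morrey convolution inequality in the spirit of Theorem \ref{thm:MorreyHardy} applied to the rescaled exponents $(p/t,u/t)$, which is legitimate since their infima are $\geq 1$; the corresponding admissibility condition \eqref{condition_m} produces the $nc_\infty(1/p,1/u,t_\ast)$-term in the hypothesis on $M$ (as $t\nearrow t_\ast$), while the $t$-th root scaling of Lemma \ref{lem:hnum} is what gives the $\sigma_{p^-}$-contribution in the hypothesis on $L$. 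Combined with $\norm{F}{\Mup}\lesssim\norm{\lambda}{\mubwpinfx}$, summability of the outer sum in $\nu$ then follows precisely from $L>-\alpha_1+\sigma_{p^-}$.

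The main obstacle is the bookkeeping of constants. The term $2p^-c_{\log}(1/p)\sigma_{p^-}$ in the hypothesis on $M$ arises from comparing $2^{\nu n/p(x)}$ to $2^{\nu n/p(2^{-\nu}m)}$ (in the spirit of Remark \ref{rem:remark2}) when converting $L_{p(\cdot)/t}$-quantities at the rescaled level back to quantities expressed in the original exponent $p(\cdot)$, whereas the $2n+2\alpha$-term and the Schwartz seminorm $\mathfrak{p}_N(\phi)$ emerge exactly as in Step 3 of Theorem \ref{convergenceS'}. One must also let $t$ approach $t_\ast$ while retaining the stated strict inequalities in $L$ and $M$, the slack being absorbed by the strict inequalities in the hypotheses. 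Finally, the convergence of the unordered double sum $\sum_{(\nu,m)}\lambda_{\nu,m}\mu_{\nu,m}$ to the same distribution is obtained as in Step 4 of Theorem \ref{convergenceS'}'s proof.
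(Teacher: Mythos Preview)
Your high-level plan---pass via Lemma \ref{lem:sobolev} to an integrability exponent $>1$, replace the crude bound \eqref{eq:lambda} by a convolution-type bound from Lemma \ref{lem:hnum}, and close with Theorem \ref{thm:MorreyHardy}---is exactly the paper's strategy. However, the way you combine these tools has a genuine gap, and your bookkeeping attributes the constants to the wrong sources.

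The gap is the ``spatial integration'' step. After applying Lemma \ref{lem:hnum} with $j=\nu$ and your $t<1$ to the sum involving the \emph{original} weight $w_\nu$, you are left with $G(x)=(\eta_{\nu,Rt}\ast|F|^t)^{1/t}(x)$ and must bound $\int_{\Rn}(1+|x|)^{\alpha-\kappa}G(x)\,dx$. You propose to handle this by Lemma \ref{lem:ttrick} and Theorem \ref{thm:MorreyHardy} on the rescaled exponents $(p/t,u/t)$, but that combination yields only $\norm{G}{\Mup}\lesssim\norm{F}{\Mup}$, a Morrey-norm bound, not a weighted $L_1$-bound. To pass from the integral to a Morrey norm one needs H\"older over unit cubes, and H\"older against $G$ with exponent $p_1=p/t$ forces you to control $\norm{G}{L_{p_1(\cdot)}(Q)}=\norm{\eta_{\nu,Rt}\ast|F|^t}{L_{p(\cdot)/t^2}(Q)}^{1/t}$, so the argument does not close at the level $(p/t,u/t)$. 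Relatedly, your claim that ``the $t$-th root scaling of Lemma \ref{lem:hnum}'' produces the $\sigma_{p^-}$-term in the $L$-condition is wrong: with $j=\nu$ that lemma contributes no $\nu$-dependent factor at all.

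The paper avoids this by separating the roles of the Sobolev embedding and Lemma \ref{lem:hnum}. It first treats the case $p^->1$ directly, using Lemma \ref{lem:hnum} with $t=1$ and then H\"older on $L_{\p}\times L_{p'(\cdot)}$ over unit cubes together with the scalar case of Theorem \ref{thm:MorreyHardy}; this yields $L>-\alpha_1$ and $M>L+2n+2\alpha+n\cpu$. For $p^-\le1$ it applies Lemma \ref{lem:sobolev} and then \emph{reruns the entire $p^->1$ argument with the new parameters} $(\vek{w}^1,p_1,u_1)$, in particular replacing $w_\nu$ by $w^1_\nu$ already in \eqref{eq:weightestimate}. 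The $\sigma_{p^-}$-term in the $L$-hypothesis then comes from the new lower index $\beta_1=\alpha_1-n(1-t)/p^-$ of $\vek{w}^1$, and the term $2p^-c_{\log}(1/p)\sigma_{p^-}$ in the $M$-hypothesis comes from the new $\beta=\alpha+n(1-t)c_{\log}(1/p)$, letting $t\nearrow p^-$. Your sketch can be repaired along these lines: once you invoke the Sobolev embedding, work consistently with $w^1$ and take $t=1$ in Lemma \ref{lem:hnum}.
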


\begin{proof}
\underline{First step:} Clearly, the first and second steps of the proof of Theorem \ref{convergenceS'} also work here, so we have as well that the inner sum in \eqref{eq:fsumB} converges both pointwisely a.e. and in $\SSn$. In particular, \eqref{eq:4.4*1/2} holds under the conditions stated there.

\smallskip

\underline{Second step:} Here we assume that $p^->1$ and show the convergence in $\SSn$ of the outer sum in \eqref{eq:fsumB}. As in the third step of the proof of Theorem \ref{convergenceS'}, the mentioned convergence follows if we show that there exists $N\in\N$ and $c>0$ such that \eqref{eq:4.4*} holds for all $\phi \in \Sn$.

Consider $\nu \in \N$. We proceed similarly as in the third step of the proof of Theorem \ref{convergenceS'} up to the point where \eqref{eq:weightestimate} and \eqref{eq:easyestimate} are used, therefore getting
\begin{align}
\left|\int_\Rn\sum_{m\in\Z^n}\lambda_\num\mu_\num(x)\phi(x)dx\right|& \notag \\
&\hspace{-11em}\lesssim2^{-\nu (L+\alpha_1)} \mathfrak{p}_{\max\{\lceil \kappa \rceil,L\}}(\phi) \notag \\ 
&\hspace{-9em} \times \int_\Rn \sum_{m\in\Z^n} |\lambda_\num| w_\nu(2^{-\nu}m) (1+2^\nu|x-2^{-\nu}m|)^{-M+L+\kappa+\alpha}(1+|x|)^{\alpha-\kappa}\, dx. \label{eq:molconv2B}
\end{align}

\noindent Now we choose $j_0\in\N_0$ in such a way that $\sqrt{n}2^{-j_0-1}\leq 1$ and choose $\kappa \in (n+\alpha,M-L-n(1+\cpu)-\alpha)$ and estimate the integral above by
\begin{align*}
\sum_{k\in \Z^n} \int_{Q_{j_0,k}} (1+|x|)^{\alpha-\kappa} \sum_{m\in\Z^n} |\lambda_\num| w_\nu(2^{-\nu}m) (1+2^\nu|x-2^{-\nu}m|)^{-M+L+\kappa+\alpha}dx&\\
&\hspace{-30em} \lesssim \sum_{k\in \Z^n} (1+|k|)^{\alpha-\kappa} \int_{Q_{j_0,k}} \left( \eta_{\nu,R} \ast \sum_{m\in\Z^n} |\lambda_\num| w_\nu(2^{-\nu}m) \chi_{\nu,m} \right)(x) \, dx,
\end{align*}
where here we have used Lemma \ref{lem:hnum} with $j=\nu$, $t=1$ and $R=M-L-\kappa-\alpha$. Notice that the choice of $\kappa$ guarantees that $M-L-\kappa-\alpha>n$. We proceed by applying H\"older's inequality in the integral, Lemma \ref{lem:xinorm} and afterwards the scalar case of Theorem \ref{thm:MorreyHardy} 
and using the fact that our choice of $\kappa$ guarantees that $R>n(1+\cpu)$. So, we estimate further
\begin{align*}
& \lesssim \sum_{k\in \Z^n} (1+|k|)^{\alpha-\kappa} \norm{\left( \eta_{\nu,R} \ast \sum_{m\in\Z^n} |\lambda_\num| w_\nu(2^{-\nu}m) \chi_{\nu,m} \right) \chi_{B_1(2^{-j_0}k)}}{L_{\p}(\Rn)} \\
&\hspace{2cm} \times \norm{\chi_{B_1(2^{-j_0}k)}}{L_{p'(\cdot)}(\Rn)} \\
& \lesssim \left( \sum_{k\in \Z^n} (1+|k|)^{\alpha-\kappa} \right) \norm{ \eta_{\nu,R} \ast \sum_{m\in\Z^n} |\lambda_\num| w_\nu(2^{-\nu}m) \chi_{\nu,m}}{\Mup} \\
& \lesssim \norm{\sum_{m\in\Z^n} |\lambda_\num| w_\nu(2^{-\nu}m) \chi_{\nu,m}}{\Mup} \\
& \leq \norm{\lambda}{\mubwpinfx}.
\end{align*}
Inserting this estimate in \eqref{eq:molconv2B} we get
\begin{equation*}
\left|\int_\Rn\sum_{m\in\Z^n}\lambda_\num\mu_\num(x)\phi(x)dx\right| \lesssim 2^{-\nu (L+\alpha_1)} \norm{\lambda}{\mubwpinfx} \mathfrak{p}_{\max\{\lceil \kappa \rceil,L\}}(\phi). 
\end{equation*}

From this in the case $\nu \in \N$ and \eqref{eq:4.4*1/2} in the case $\nu=0$ the conclusion \eqref{eq:4.4*} follows easily due to our hypothesis on $L$.

\smallskip

\underline{Third step:} Now we assume that $p^- \in (0,1]$ and show the convergence in $\SSn$ of the outer sum in \eqref{eq:fsumB}. As in the previous step, the mentioned convergence follows if we show that there exist $N\in\N$ and $c>0$ such that \eqref{eq:4.4*} holds for all $\phi \in \Sn$.

Let $p_0:=p$, $u_0:=u$, $\vek{w}^0:=\vek{w}$ and consider $t \in \left(\sup_{x \in \Rn} \left(1-\frac{p_0(x)}{u_0(x)}\right), p_0^-\right)$, $p_1(\cdot):=\frac{p_0(\cdot)}{t}$ and $w^1_\nu(\cdot):=w^0_\nu(\cdot) 2^{-\nu n\left(\frac{1}{p_0(\cdot)}-\frac{1}{p_1(\cdot)}\right)} = w_\nu(\cdot) 2^{-\nu n\frac{1-t}{p(\cdot)}}$, $\nu \in \N_0$. Such a choice of $t$ is possible due to our hypothesis $\sup_{x \in \Rn} \left(1-\frac{p(x)}{u(x)}\right) < p^-$. Since $p_0 \in \Plog$, also $p_1 \in \Plog$. On the other hand, from $\vek{w}^0 \in \mgk$ it follows, with the help of \cite[Example 2.5]{AC16}, that $\vek{w}^1 \in \mathcal{W}^\beta_{\beta_1,\beta_2}$ with $\beta=\alpha+n(1-t)c_{\log}(1/p)$, $\beta_1=\alpha_1-n(1-t)/p^-$ and $\beta_2=\alpha_2-n(1-t)/p^+$. Consider also $u_1 \in\mathcal{P}(\Rn)$ given by $\frac{1}{u_1(\cdot)} = \frac{1}{u_0(\cdot)}-\frac{1}{p_0(\cdot)} +\frac{1}{p_1(\cdot)}$. Our hypotheses and the discussion in Remark \ref{rem:sobolevA} guarantee that this is possible. We have, moreover, that $p_0(x) \leq p_1(x) \leq u_1(x) \leq \sup u_1 < \infty$, so that applying Lemma \ref{lem:sobolev} to $\lambda \in \mubwpinfx = \mathit{n}^{\vek{w}^0,u_0(\cdot)}_{p_0(\cdot),\infty}$ we get that also $\lambda \in \mathit{n}^{\vek{w}^1,u_1(\cdot)}_{p_1(\cdot),\infty}$. Since also $p_1^->1$, we can thus apply the second step above to such a $\lambda$ and for the parameters $\vek{w}^1$, $p_1$ and $u_1$ as long as
$$L>-\alpha_1+n(1-t)/p^-$$
and
$$M>L+2n+2\alpha+2n(1-t)c_{\log}(1/p)+nc_\infty(1/p,1/u,t).$$
Due to our hypotheses on $L$ and $M$ in the statement of the theorem it is indeed possible to choose the $t$ above so that also the two last inequalities are fulfilled. Therefore our desired conclusion \eqref{eq:4.4*} also holds for the case considered in this step.

\smallskip

\underline{Forth step:}
The proof of the last statement of the theorem follows similarly as in \cite[7.1, Step 3]{AC16b}.
\end{proof}

\begin{rem}
Comparing Theorems \ref{convergenceS'} and \ref{convergenceS'B}, we see that they provide the same conclusions under different hypotheses. In Theorem \ref{convergenceS'B} $\sup u<\infty$ is further assumed and the requirement for $M$ is in general stronger. On the other hand, according to the discussion in Remark \ref{rem:sobolevB}, the requirement for $L$ is weaker, though in the case $p^-<1$ it only applies under the extra condition $\sup_{x \in \Rn} \left(1-\frac{p(x)}{u(x)}\right) < p^-$. 
We would also like to remark that when $\sup_{x \in \Rn} \left(1-\frac{p(x)}{u(x)}\right) \geq p^-$ then it follows by straightforward calculations that $\sigma_{p^-} \geq \frac{n}{\sup u}$. Though this does not guarantee that then the requirement for $L$ in Theorem \ref{convergenceS'B} is stronger than in Theorem \ref{convergenceS'}, in the case when $u$ happens to be constant it definitely shows that it is not weaker.
\end{rem}

\begin{rem} \label{improvement}
It is possible to improve slightly the conditions on $p$ and $u$ in the above theorem, by not imposing its boundedness from above. The boundedness was used in the proof only to apply the scalar case of Theorem \ref{thm:MorreyHardy} and the scalar case holds without such restriction, as can be seen in \cite{AC18}. 
\end{rem}

Before coming to the theorem giving conditions for a linear combination of molecules to be in our functions spaces and showing the opposite direction of Theorem \ref{1st_direction}, we still need some preparatory results. The next one estimates convolutions of molecules with functions constructed from admissible pairs. See \cite[Lemma 7.2]{AC16b} and references therein for its proof.

\begin{lem}\label{lem:muconv}
Let $(\varphi_j)_{j\in\N_0}$ be a system constructed from any given admissible pair according to Definition \ref{dfn:admissiblePair} and $(\mu_\num)$ be $[K,L,M]$-molecules. Then, for $M>L+n$ and $N \in [0,M-L-n]$,
\begin{align*}
|(\varphi_j^\vee\ast\mu_\num)(x)|&\lesssim2^{-(\nu-j)(L+n)}(1+2^j|x-2^{-\nu}m|)^{-N}\quad\text{for }j\leq\nu
\intertext{and}
|(\varphi_j^\vee\ast\mu_\num)(x)|&\lesssim2^{-(j-\nu)K}(1+2^\nu|x-2^{-\nu}m|)^{-M}\quad\text{for }j\geq\nu,
\end{align*}
with the implicit constants independent of $x\in\Rn$, $m\in\Z^n$, $j,\nu\in\N_0$ and, as long as $K,L,M$ are kept fixed, of the particular system of molecules taken.
\end{lem}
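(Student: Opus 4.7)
The plan is to split into the two regimes $j\leq\nu$ and $j\geq\nu$. In each case the diagonal gain $2^{-|\nu-j|\,\bullet}$ comes from cancellation obtained by pitting the vanishing moments on one side against a Taylor expansion of the other, while the spatial decay factor survives by splitting the convolution integral into a near and a far region. Throughout I would use the scaling $\varphi_j^\vee(y)=2^{jn}\varphi^\vee(2^jy)$ for $j\geq 1$ (so that $|D^\gamma\varphi_j^\vee(y)|\lesssim 2^{j(n+|\gamma|)}(1+2^j|y|)^{-N'}$ for any $N'\geq 0$), together with the crucial observation that for $j\geq 1$ the function $\varphi_j^\vee$ has all moments equal to zero, since $\varphi_j$ is supported away from the origin.

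\textbf{Case $j\leq\nu$ with $\nu\geq 1$.} Here I would invoke $\int y^\beta\mu_{\nu,m}(y)\,dy=0$ for $|\beta|<L$ to subtract from $\varphi_j^\vee(x-y)$ its order $L-1$ Taylor polynomial in $y$ at $y=2^{-\nu}m$ inside the convolution integral. By Taylor's integral remainder, the resulting integrand is bounded by $c\,|y-2^{-\nu}m|^L\sup_\xi|D^L\varphi_j^\vee(\xi)|\lesssim|y-2^{-\nu}m|^L\,2^{j(n+L)}(1+2^j|\xi|)^{-N'}$, with $\xi$ on the segment from $x-y$ to $x-2^{-\nu}m$ and $N'$ at my disposal. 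Changing variables $y=2^{-\nu}m+2^{-\nu}w$, the Jacobian contributes $2^{-\nu n}$ and the polynomial factor contributes $2^{-\nu L}$, which combines with $2^{j(n+L)}$ into exactly the diagonal factor $2^{-(\nu-j)(L+n)}$. I would then split the $w$-integral into $|w|\leq\tfrac12\,2^\nu|x-2^{-\nu}m|$, where $|\xi|\geq\tfrac12|x-2^{-\nu}m|$ yields $(1+2^j|\xi|)^{-N'}\lesssim(1+2^j|x-2^{-\nu}m|)^{-N}$ and the residual $|w|^L(1+|w|)^{-M}$ integrates to a constant, and its complement, where $|w|^L(1+|w|)^{-M}$ integrates to $(1+2^\nu|x-2^{-\nu}m|)^{-(M-L-n)}$, which is majorized by $(1+2^j|x-2^{-\nu}m|)^{-N}$ since $N\leq M-L-n$ and $2^\nu\geq 2^j$. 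The sub-case $\nu=0$ requires no cancellation: then $2^{-(\nu-j)(L+n)}=1$ and the claim reduces to a direct Schwartz-type pointwise estimate using only the decay of $\varphi_0^\vee$ and of $\mu_{0,m}$.

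\textbf{Case $j\geq\nu$.} I would instead write $(\varphi_j^\vee\ast\mu_{\nu,m})(x)=\int\varphi_j^\vee(z)\mu_{\nu,m}(x-z)\,dz$ and, for $j\geq 1$, exploit the vanishing of all moments of $\varphi_j^\vee$ by subtracting from $\mu_{\nu,m}(x-\cdot)$ its order $K-1$ Taylor polynomial at $z=0$. The remainder is controlled by $c\,|z|^K\sup_\xi|D^K\mu_{\nu,m}(x-\xi)|\leq c\,|z|^K\,2^{\nu K}(1+2^\nu|x-\xi-2^{-\nu}m|)^{-M}$, with $\xi$ on the segment from $0$ to $z$. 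A routine change of variables yields $\int|z|^K|\varphi_j^\vee(z)|\,dz\lesssim 2^{-jK}$, producing the factor $2^{-(j-\nu)K}$. Splitting the $z$-integral into $|z|\leq\tfrac12|x-2^{-\nu}m|$, where $|\xi|\leq|z|\leq\tfrac12|x-2^{-\nu}m|$ and hence $(1+2^\nu|x-\xi-2^{-\nu}m|)^{-M}\lesssim(1+2^\nu|x-2^{-\nu}m|)^{-M}$, and its complement, where the Schwartz decay $(1+2^j|z|)^{-N''}$ of $\varphi_j^\vee$ with $N''$ taken large absorbs the loss and still delivers $(1+2^\nu|x-2^{-\nu}m|)^{-M}$ (using $2^j\geq 2^\nu$), finishes this case. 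The sub-case $j=\nu=0$ is again a direct Schwartz estimate from $\Phi^\vee$ and $\mu_{0,m}$.

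The main technical obstacle I anticipate is the bookkeeping in the two near/far splittings, which is the standard device for converting a uniform Taylor-remainder bound into one that still carries the natural spatial decay; everything else is a routine combination of Schwartz estimates, the derivative and decay bounds encoded in Definition \ref{def:molecules}, and the scaling identity $\varphi_j^\vee(y)=2^{jn}\varphi^\vee(2^jy)$.
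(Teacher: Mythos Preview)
Your proof is correct and follows the standard route: in each regime you exploit the moment conditions on the coarser object (the molecule when $j\leq\nu$, the kernel $\varphi_j^\vee$ when $j\geq\nu$) to insert a Taylor remainder, extract the diagonal gain from the scaling, and then recover the spatial decay by the near/far splitting. The paper itself does not supply a proof of this lemma but simply refers to \cite[Lemma~7.2]{AC16b} and references therein; your argument is precisely the approach one finds in such references, so there is nothing to compare beyond noting that you have reproduced the classical proof.
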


\begin{thm} \label{conv_convolution}
Under the hypotheses of Theorem \ref{convergenceS'}, we have also that
\begin{equation}
\sum_{\nu=0}^\infty \sum_{m\in\Z^n} \lambda_{\nu,m}(\varphi_j^\vee\ast\mu_\num)
\label{eq:sum_convolution}
\end{equation}
converges both in $\SSn$ and pointwisely a.e. to the same distribution, where $(\varphi_j)_{j\in\N_0}$ is a system constructed from any given admissible pair according to Definition \ref{dfn:admissiblePair}.
\end{thm}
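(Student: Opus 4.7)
The plan is to reduce both convergence statements to Theorem \ref{convergenceS'} via the adjoint relation
$$\langle\varphi_j^\vee\ast\mu_{\nu,m},\phi\rangle=\langle\mu_{\nu,m},\widetilde{\varphi_j^\vee}\ast\phi\rangle,\qquad \phi\in\Sn,$$
where $\widetilde g(\cdot):=g(-\cdot)$. Since for each fixed $j\in\N_0$ the function $\widetilde{\varphi_j^\vee}\ast\phi$ again lies in $\Sn$, applying Theorem \ref{convergenceS'} with this test function in place of $\phi$ immediately yields the $\SSn$-convergence of both the iterated and the double series in \eqref{eq:sum_convolution}; their common $\SSn$-limit equals $\varphi_j^\vee\ast f$, where $f=\sum_{\nu,m}\lambda_{\nu,m}\mu_{\nu,m}$ is the distribution furnished by Theorem \ref{convergenceS'}.

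For the pointwise a.e.\ convergence of the inner $m$-sum (for each fixed $\nu$) I would mimic Step 2 of the proof of Theorem \ref{convergenceS'}. Lemma \ref{lem:muconv}, applied in each of the regimes $\nu\geq j$ and $\nu<j$, provides Schwartz-type pointwise decay estimates on $|(\varphi_j^\vee\ast\mu_{\nu,m})(x)|$. Combining them with the bound \eqref{eq:lambda} on $|\lambda_{\nu,m}|$, using \eqref{eq:weightestimate} to absorb $w_\nu(2^{-\nu}m)^{-1}$, and summing in $m$ via the standard counting bound $\sum_m(1+2^{\min(\nu,j)}|x-2^{-\nu}m|)^{-R}\lesssim\max(1,2^{(\nu-j)n})$ (valid for $R>n$), one obtains a pointwise estimate of the form
$$\sum_{m\in\Z^n}\bigl|\lambda_{\nu,m}(\varphi_j^\vee\ast\mu_{\nu,m})(x)\bigr|\;\lesssim\;C_{\nu,j}\,\norm{\lambda}{\mubwpinfx}\,(1+|x|)^\alpha,$$
which is finite for a.e.\ $x\in\Rn$ and guarantees absolute convergence of the inner $m$-sum. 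Multiplying by $|\phi(x)|\lesssim\mathfrak{p}_\kappa(\phi)(1+|x|)^{-\kappa}$ for $\kappa>\alpha+n$ and integrating in $x$, the resulting quantity is finite, so Fubini allows one to interchange integral and $m$-sum and to identify the pointwise inner limit with the regular distribution obtained from the corresponding $\SSn$-limit, exactly in the spirit of Steps 2 and 4 of the proof of Theorem \ref{convergenceS'}.

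Combining the two ingredients then yields the claim: the outer $\nu$-series of those regular distributions converges in $\SSn$ to $\varphi_j^\vee\ast f$, which matches the $\SSn$-limit of the double series and of the iterated series from the first paragraph.

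The main technical obstacle I expect lies in the pointwise bookkeeping. Two natural scales coexist, $2^{-j}$ coming from $\varphi_j^\vee$ and $2^{-\nu}$ coming from $\mu_{\nu,m}$, and the spatial $\alpha$-factor produced by \eqref{eq:weightestimate} must be absorbed into the decay of Lemma \ref{lem:muconv} through an estimate like $(1+2^\nu|x-2^{-\nu}m|)^\alpha\leq 2^{|\nu-j|\alpha}(1+2^{\min(\nu,j)}|x-2^{-\nu}m|)^\alpha$. This forces the exponent $N$ (available in Lemma \ref{lem:muconv} up to $M-L-n$) to be chosen larger than $\alpha+n$, which is admissible precisely under the hypothesis $M>L+2n+2\alpha$ of Theorem \ref{convergenceS'}.
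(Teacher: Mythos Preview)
Your argument for the $\SSn$-convergence via the adjoint relation is fine and is essentially what the paper means when it says that the convergence in $\SSn$ is clear from Theorem \ref{convergenceS'}. The treatment of the inner $m$-sum is also along the right lines.

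There is, however, a genuine gap. The theorem asserts that the \emph{iterated} sum converges pointwise a.e., not only in $\SSn$, and this is precisely what is used later in the proof of Theorem \ref{thm:416}, where one needs to write $(\varphi_j^\vee\ast f)(x)$ as the pointwise double sum $\sum_\nu\sum_m\lambda_{\nu,m}(\varphi_j^\vee\ast\mu_{\nu,m})(x)$ in order to estimate it. Your proposal only establishes pointwise convergence of the inner $m$-sum and then falls back on $\SSn$-convergence for the outer $\nu$-series. You must also show that $\sum_{\nu>j}\sum_m|\lambda_{\nu,m}(\varphi_j^\vee\ast\mu_{\nu,m})(x)\phi(x)|$ is integrable (hence finite a.e.), which amounts to proving that your constants $C_{\nu,j}$ are summable in $\nu$. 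The paper does this explicitly, obtaining the bound $c\,2^{-\nu(L+\alpha_1-n/\inf u)}2^{jL}\norm{\lambda}{\mubwpinfx}\mathfrak{p}_\kappa(\phi)$ for the integrated $m$-sum, which is summable over $\nu>j$ exactly under the hypothesis $L>-\alpha_1+n/\inf u$.

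There is also a related technical misstep in your last paragraph. In the regime $\nu>j$ the decay supplied by Lemma \ref{lem:muconv} lives at scale $2^j$, while \eqref{eq:weightestimate} produces the factor $(1+2^\nu|x-2^{-\nu}m|)^\alpha$ at scale $2^\nu$. Your scaling inequality $(1+2^\nu|\cdot|)^\alpha\leq 2^{(\nu-j)\alpha}(1+2^j|\cdot|)^\alpha$ is correct but costs an extra factor $2^{(\nu-j)\alpha}$; tracking everything through, summability of the outer $\nu$-sum would then require the stronger hypothesis $L>-\alpha_1+n/\inf u+\alpha$, which is \emph{not} assumed. The paper avoids this loss by using, instead of \eqref{eq:weightestimate}, the variant
\[
1\lesssim 2^{-\nu\alpha_1}w_\nu(2^{-\nu}m)(1+|x|)^\alpha(1+2^j|x-2^{-\nu}m|)^\alpha,
\]
which already lives at scale $2^j$ and follows directly from the admissible-weight properties since $2^j\leq 2^\nu$. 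Combined with the counting estimate $\sum_m(1+2^j|x-2^{-\nu}m|)^{\alpha-N}\lesssim 2^{(\nu-j)n}$ for $N>\alpha+n$, this yields the correct $\nu$-decay and closes the argument under the stated hypothesis on $L$.
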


\begin{proof}
The convergence in $\SSn$ is clear from what we have already obtained in Theorem \ref{convergenceS'}.

That the inner sum converges pointwisely a.e. to the corresponding limit in $\SSn$ can be proved similarly as in the second step of the proof of Theorem \ref{convergenceS'}, now using the estimates in Lemma \ref{lem:muconv} instead of properties of molecules. However, in the case when $\nu>j$, instead of \eqref{eq:weightestimate} one should use the estimate
\begin{align}\label{eq:weightestimate2}
1\lesssim 2^{-\nu\alpha_1}w_\nu(2^{-\nu}m)(1+|x|)^\alpha(1+2^j|x-2^{-\nu}m|)^\alpha,
\end{align} 
which is also an easy consequence of the properties of a weight sequence, and afterwards the estimate
\begin{align*}
\sum_{m\in\Z^n}(1+2^j|x-2^{-\nu}m|)^{\alpha-N}& \leq c\, 2^{(\nu-j)n}, \quad \mbox{choosing }\, N>\alpha +n,
\end{align*}
with $c>0$ independent of $\nu, j \in \N_0$ (with $\nu \geq j$) and $x \in \Rn$, which is a direct consequence of \cite[Lemma 3.7]{Kempka10}. 

So, in the case $\nu>j$ one gets, in this way, and for any integer $\kappa > n + \alpha$,
\begin{equation}
\int_\Rn \sum_{m\in\Z^n} |\lambda_{\nu,m}(\varphi_j^\vee\ast\mu_\num)(x)\phi(x)|\, dx \leq c\, 2^{-\nu (L+\alpha_1-\frac n{\inf u})} 2^{jL} \norm{\lambda}{\mubwpinfx} \mathfrak{p}_{\kappa}(\phi),
\label{eq:est_sum_m}
\end{equation}
where $c>0$ is independent of $\nu,j \in \N_0$ (with $\nu \geq j$), $\lambda \in \mubwpinfx$ and $\phi \in \Sn$.

In order to prove that then the outer sum in \eqref{eq:sum_convolution} converges also pointwisely a.e. to the corresponding limit in $\SSn$ it suffices clearly to deal with the sum for $\nu>j$, as the remaining sum is finite. The idea is again to start as in the second step of the proof of Theorem \ref{convergenceS'}. We obtain with the help of \eqref{eq:est_sum_m} and the hypothesis on $L$
\begin{eqnarray} \label{eq:est_sum_nu>j}
\lefteqn{\int_\Rn \sum_{\nu=j+1}^\infty \left| \sum_{m\in\Z^n} \lambda_{\nu,m}(\varphi_j^\vee\ast\mu_\num)(x)\phi(x) \right| \, dx} \\
& \lesssim & 2^{jL} \sum_{\nu=j+1}^\infty 2^{-\nu (L+\alpha_1-\frac n{\inf u})} \norm{\lambda}{\mubwpinfx} \mathfrak{p}_{\kappa}(\phi) \nonumber \\
& \lesssim & 2^{-j(\alpha_1-\frac n{\inf u})} \norm{\lambda}{\mubwpinfx} \mathfrak{p}_{\kappa}(\phi), \nonumber
\end{eqnarray}
with the implicit constant independent of $j \in \N_0$, $\lambda \in \mubwpinfx$ and $\phi \in \Sn$.
\end{proof}

\begin{thm} \label{conv_convolutionB}
Under the hypotheses of Theorem \ref{convergenceS'B}, except that for $L$ we impose the stronger condition
$$L > -\alpha_1+\sigma_{p^-} + n c_\infty(1/p,1/u,\min\{1,p^-\}),$$
we have that
\begin{equation}
\sum_{\nu=0}^\infty \sum_{m\in\Z^n} \lambda_{\nu,m}(\varphi_j^\vee\ast\mu_\num)
\label{eq:sum_convolutionB}
\end{equation}
converges both in $\SSn$ and pointwisely a.e. to the same distribution, where $(\varphi_j)_{j\in\N_0}$ is a system constructed from any given admissible pair according to Definition \ref{dfn:admissiblePair}.
\end{thm}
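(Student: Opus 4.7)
The plan is to follow the structure of Theorem \ref{conv_convolution}, upgrading its key estimates with the sharper techniques from the proof of Theorem \ref{convergenceS'B}. The convergence of the outer sum in $\SSn$ is immediate: by Theorem \ref{convergenceS'B} (whose hypotheses on $L$ and $M$ are implied by the present, stronger, ones) the iterated sum $\sum_\nu \sum_m \lambda_{\nu,m} \mu_{\nu,m}$ converges in $\SSn$, and since convolution with $\varphi_j^\vee \in \Sn$ is continuous on $\SSn$, the convolved iterated sum converges in $\SSn$ to the convolution of the original limit with $\varphi_j^\vee$. The pointwise convergence of the inner sum and the identification of its a.e.\ value with its $\SSn$-limit is unchanged from the argument in the second step of the proof of Theorem \ref{conv_convolution}; only Lemma \ref{lem:muconv} and the crude bound \eqref{eq:lambda} are used there, and both remain available.

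The substantive work lies in the outer-sum pointwise convergence. As in Theorem \ref{conv_convolution}, the terms with $\nu \leq j$ form a finite sum and the task is to exhibit a bound on $\int_\Rn \sum_m |\lambda_{\nu,m} (\varphi_j^\vee \ast \mu_{\nu,m})(x) \phi(x)|\, dx$ for $\nu > j$ whose sum over $\nu$ converges. Assume first that $p^- > 1$. Starting from Lemma \ref{lem:muconv} for $\nu \geq j$ and combining with \eqref{eq:weightestimate2} and \eqref{eq:easyestimate} one factors out $2^{-(\nu-j)(L+n)} 2^{-\nu \alpha_1} w_\nu(2^{-\nu}m) \mathfrak{p}_\kappa(\phi) (1+|x|)^{\alpha-\kappa}$; then, mimicking the second step of the proof of Theorem \ref{convergenceS'B}, apply Lemma \ref{lem:hnum} with $t=1$ (which applies since $\min(\nu,j)=j$), partition the $x$-integral over dyadic cubes of size $2^{-j_0}$, invoke H\"older's inequality inside each cube using Lemma \ref{lem:xinorm}, and apply the scalar Morrey--Hardy inequality (Theorem \ref{thm:MorreyHardy}) to the resulting $\eta_{\nu,R}$-convolution. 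Balancing $2^{-(\nu-j)(L+n)}$ with the $2^{(\nu-j)R}$ produced by Lemma \ref{lem:hnum}, subject to the Morrey--Hardy restriction $R > n(1+\cpu)$, yields a factor of the order $2^{-(\nu-j)(L - n\cpu)} 2^{-\nu \alpha_1}$ (up to arbitrarily small losses); the geometric series in $\nu > j$ then converges thanks to the present hypothesis on $L$, which in this case reads $L > -\alpha_1 + n\cpu$.

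When $p^- \leq 1$ we reduce to the previous case by the Sobolev-type embedding in Lemma \ref{lem:sobolev}, exactly as in the third step of the proof of Theorem \ref{convergenceS'B}: choose $t \in \bigl(\sup_{x\in\Rn}(1-p(x)/u(x)), p^-\bigr)$, define $p_1 := p/t$, the weight sequence $\vek{w}^1$ by \eqref{eq:sobolev2} and $u_1$ by \eqref{eq:sobolev4}, so that $\lambda \in \mathit{n}^{\vek{w}^1,u_1(\cdot)}_{p_1(\cdot),\infty}$ with $p_1^->1$, and then apply the previous case to this data. A straightforward computation (identical to the one in the proof of Theorem \ref{convergenceS'B}, letting $t \to p^-$) shows that the present condition $L > -\alpha_1 + \sigma_{p^-} + n c_\infty(1/p,1/u,\min\{1,p^-\})$ is exactly what makes the analogous condition for the reduced data satisfiable.

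The main obstacle in this strategy is the precise tracking of the exponents through the reduction step in the $p^- \leq 1$ case: checking that the three constraints $N \leq M-L-n$ from Lemma \ref{lem:muconv}, $R > n/t$ from Lemma \ref{lem:hnum}, and $R > n(1+c_\infty(1/p_1,1/u_1))$ from the scalar Morrey--Hardy inequality can simultaneously be met for an admissible choice of $t$. This compatibility is ultimately what forces the extra summands $2p^- c_{\log}(1/p)\sigma_{p^-} + n c_\infty(1/p,1/u,\min\{1,p^-\})$ inherited from the hypothesis on $M$ in Theorem \ref{convergenceS'B}, and it must be re-verified here because the $\varphi_j^\vee\ast \mu_{\nu,m}$ factors behave differently from the bare molecules $\mu_{\nu,m}$.
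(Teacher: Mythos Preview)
Your proposal is correct and follows essentially the same route as the paper: convergence in $\SSn$ via Theorem \ref{convergenceS'B}, inner-sum pointwise convergence as in Theorem \ref{conv_convolution}, and for the outer sum in the case $p^->1$ the combination of Lemma \ref{lem:muconv}, \eqref{eq:weightestimate2}, Lemma \ref{lem:hnum} (with $t=1$), dyadic partitioning, H\"older, and the scalar Morrey--Hardy inequality, followed in the case $p^-\leq 1$ by the Sobolev reduction of Lemma \ref{lem:sobolev}. One small inaccuracy: estimate \eqref{eq:easyestimate} is not needed here, since after Lemma \ref{lem:muconv} the test function $\phi$ is evaluated at $x$ (no Taylor remainder point $\xi$ appears), so the factor $(1+|x|)^{-\kappa}$ comes directly from $\mathfrak{p}_\kappa(\phi)$; dropping that reference leaves your argument intact.
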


\begin{proof}
The convergence in $\SSn$ is clear from what we have already obtained in Theorem \ref{convergenceS'B}.

The proof that the inner sum converges pointwisely a.e. to the corresponding limit in $\SSn$ can be done exactly as in Theorem \ref{conv_convolution}.

To prove that the outer sum in \eqref{eq:sum_convolutionB} converges also pointwisely a.e. to the corresponding limit in $\SSn$, again it suffices to deal with the sum for $\nu>j$, as the remaining sum is finite. And, as in the proof of Theorem \ref{conv_convolution}, to that effect it is enough to get a result like \eqref{eq:est_sum_nu>j}, namely the existence of $J\in \N$ and $c>0$ such that
\begin{equation*} \label{eq:est_sum_nu>jB}
\int_\Rn \sum_{\nu=j+1}^\infty \left| \sum_{m\in\Z^n} \lambda_{\nu,m}(\varphi_j^\vee\ast\mu_\num)(x)\phi(x) \right| \, dx \leq c\, \mathfrak{p}_J(\phi) 
\end{equation*}
for all $\phi \in \Sn$.

We consider first the case $p^->1$. Since 
\begin{eqnarray}
\lefteqn{\int_\Rn \sum_{\nu=j+1}^\infty \left| \sum_{m\in\Z^n} \lambda_{\nu,m}(\varphi_j^\vee\ast\mu_\num)(x)\phi(x) \right| \, dx} \nonumber \\
& \leq & \sum_{\nu=j+1}^\infty \int_\Rn \sum_{m\in\Z^n} \left| \lambda_{\nu,m}(\varphi_j^\vee\ast\mu_\num)(x)\phi(x) \right| \, dx,
\label{eq:triangular}
\end{eqnarray}
we start by estimating the latter integral. Here the situation is somewhat similar to when we estimated \eqref{eq:molconv2B} in the second step of the proof of Theorem \ref{convergenceS'B}, though now we are using Lemma \ref{lem:muconv} and the estimate \eqref{eq:weightestimate2}, so that we get, with $N \in [0,M-L-n]$ and $\kappa >0$ at our disposal,
\begin{align}
\int_\Rn \sum_{m\in\Z^n} \left| \lambda_{\nu,m}(\varphi_j^\vee\ast\mu_\num)(x)\phi(x) \right| \, dx & \notag \\
&\hspace{-11em}\lesssim 2^{-j\alpha_1} 2^{-(\nu-j) (L+n+\alpha_1)} \mathfrak{p}_{\lceil \alpha+\kappa \rceil}(\phi) \notag \\ 
&\hspace{-9em} \times \int_\Rn \sum_{m\in\Z^n} |\lambda_\num| w_\nu(2^{-\nu}m) (1+2^j|x-2^{-\nu}m|)^{\alpha-N}(1+|x|)^{-\kappa}\, dx. \label{eq:convolconv}
\end{align}
Now we deal with the latter integral arguing as after \eqref{eq:molconv2B} in the second step of the proof of Theorem \ref{convergenceS'B}. With $R=N-\alpha$ and choosing $\kappa > n$ as well as $N\in(\alpha+n(1+\cpu),\min\{M-L-n,L+\alpha_1+n+\alpha\})$ we obtain
\begin{align*}
&\int_\Rn \sum_{m\in\Z^n} |\lambda_\num| w_\nu(2^{-\nu}m) (1+2^j|x-2^{-\nu}m|)^{\alpha-N}(1+|x|)^{-\kappa}\, dx \\
&\hspace{10em}\lesssim  2^{(\nu-j)(N-\alpha)} \norm{\lambda}{\mubwpinfx}.
\end{align*}

%
%
Inserting this estimate in \eqref{eq:convolconv} and the latter in \eqref{eq:triangular}, we finally get
\begin{eqnarray*}
\lefteqn{\int_\Rn \sum_{\nu=j+1}^\infty \left| \sum_{m\in\Z^n} \lambda_{\nu,m}(\varphi_j^\vee\ast\mu_\num)(x)\phi(x) \right| \, dx} \\
& \lesssim & \sum_{\nu=j+1}^\infty 2^{-j\alpha_1} 2^{-(\nu-j) (L+n+\alpha_1)} \mathfrak{p}_{\lceil \alpha+\kappa \rceil}(\phi) 2^{(\nu-j)(N-\alpha)} \norm{\lambda}{\mubwpinfx} \\
& \lesssim & 2^{-j\alpha_1} \norm{\lambda}{\mubwpinfx} \mathfrak{p}_{\lceil \alpha+\kappa \rceil}(\phi).
\end{eqnarray*}

As to the case $p^- \in (0,1]$, it can be proved by reduction to the preceding case with the same choices of $p_0$, $u_0$, $\vek{w}^0$, $t$, $p_1$, $u_1$ and $\vek{w}^1$ as in the third step of the proof of Theorem \ref{convergenceS'B}, this being possible by the hypothesis on $M$ and the (new) hypothesis on $L$.
\end{proof}

\begin{rem}
By similar reasons as in Remark \ref{improvement}, it is possible to prove that the above theorem holds without imposing $\sup u < \infty$.
\end{rem}

Finally, we come to the result in the direction opposite of Theorem \ref{1st_direction}.

\begin{thm}\label{thm:416}
Let $\vek{w}\in\mgk$, $p,q\in\Plog$ and $u\in\mathcal{P}(\Rn)$ with $0<p^- \leq p(x)\leq u(x) \leq \sup u < \infty$ and $q^-,q^+\in(0,\infty)$. Let $K,L\in\N_0$ and $M>0$ be such that $K>\alpha_2$ and either
\begin{equation}
L>-\alpha_1+\max\left\{\frac n{\inf u},\sigma_{p^-,q^-}+n\cpu\right\}
\label{eq:L1}
\end{equation}
and
\begin{equation}
M>L+2n+2\alpha+\sigma_{p^-,q^-}+n\cpu
\label{eq:M1}
\end{equation}
or, when $\sup_{x\in\Rn}\left(1-\frac{p(x)}{u(x)}\right)<p^-$ (which necessarily holds when $p^-\geq 1$),
\begin{equation}
L>-\alpha_1+\sigma_{p^-,q^-}+nc_\infty(1/p,1/u,\min\{1,p^-\})
\label{eq:L2}
\end{equation}
and
\begin{equation}
M>L+2n+2\alpha+\max\left\{1,2c_{\log}(1/p)\right\}\sigma_{p^-,q^-}+nc_\infty(1/p,1/u,\min\{1,p^-\}).
\label{eq:M2}
\end{equation}
Let $\lambda\in\mufwpqx$ and $(\mu_{\nu,m})_{\nu,m}$ be $[K,L,M]$-molecules. Then
\begin{align}
f:=\sum_{\nu=0}^\infty\sum_{m\in\Z^n}\lambda_\num\mu_\num, \quad \mbox{convergence in }\, \SSn, \label{eq:f=sum}
\intertext{belongs to $\Mufwpqx$ and there exists a constant $c>0$ independent of $\lambda$ and $(\mu_{\nu,m})_{\nu,m}$ such that}
\norm{f}{\Mufwpqx}\leq c\norm{\lambda}{\mufwpqx}. \label{eq:fnorm}
\end{align}
\end{thm}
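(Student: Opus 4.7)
The plan is to apply the Peetre maximal function characterization (Theorem \ref{thm:lm}) and estimate
$$\norm{f}{\Mufwpqx}\approx\norm{(w_j(\varphi_j\hat{f})^\vee)_j}{\Muplq},$$
for $(\varphi_j)_j$ constructed from an admissible pair. Since $\lambda\in\mufwpqx\hookrightarrow\mubwpinfx$ by Remark \ref{smallen}(ii), Theorem \ref{convergenceS'} (under \eqref{eq:L1}--\eqref{eq:M1}) or Theorem \ref{convergenceS'B} (under \eqref{eq:L2}--\eqref{eq:M2}) yields convergence of \eqref{eq:f=sum} in $\SSn$, while Theorem \ref{conv_convolution} or Theorem \ref{conv_convolutionB} (its hypotheses being implied by our slightly stronger $L$ and $M$) justifies the identity
$$(\varphi_j\hat{f})^\vee(x)=\sum_{\nu=0}^\infty\sum_{m\in\Z^n}\lambda_{\nu,m}(\varphi_j^\vee\ast\mu_{\nu,m})(x)\quad\text{a.e.\ }x\in\Rn,\ j\in\N_0.$$

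Fix $j\in\N_0$ and split the outer sum into $\nu\leq j$ and $\nu>j$. Lemma \ref{lem:muconv} supplies
$$|(\varphi_j^\vee\ast\mu_{\nu,m})(x)|\lesssim 2^{-|j-\nu|S}(1+2^{\min(\nu,j)}|x-2^{-\nu}m|)^{-N},$$
where $S=K-\alpha_2$ (for $j\geq\nu$) or $S=L+n+\alpha_1$ (for $j\leq\nu$, absorbing the weight growth below) and $N\in[0,M-L-n]$ is at our disposal. Combining with the admissibility of $\vek{w}$, namely
$$w_j(x)\lesssim 2^{|j-\nu|\max\{-\alpha_1,\alpha_2\}}w_\nu(2^{-\nu}m)(1+2^{\min(\nu,j)}|x-2^{-\nu}m|)^\alpha,$$
we obtain, for some $\delta>0$ and $N'$ still large at our disposal,
$$|w_j(x)\,\lambda_{\nu,m}\,(\varphi_j^\vee\ast\mu_{\nu,m})(x)|\lesssim 2^{-|j-\nu|\delta}\,|\lambda_{\nu,m}|w_\nu(2^{-\nu}m)(1+2^{\min(\nu,j)}|x-2^{-\nu}m|)^{-N'}.$$
The hypotheses $K>\alpha_2$ together with the lower bound on $L$ ensure $\delta>0$ in both regimes.

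Next, pick $t\in(0,\min\{1,p^-,q^-\}]$ (in the first case take $t=\min\{1,p^-,q^-\}$; in the second case take $t\in(\sup_x(1-p(x)/u(x)),p^-)$ sufficiently small, as in the third step of Theorem \ref{convergenceS'B}) and apply Lemma \ref{lem:hnum} with $R=N'$, dominating the sum over $m$ by
$$\max(1,2^{(\nu-j)N'})\Big(\eta_{\nu,N't}\ast\Big|\sum_{m\in\Z^n}w_\nu(2^{-\nu}m)\lambda_{\nu,m}\chi_{\nu,m}\Big|^t\Big)^{1/t}(x).$$
The factor $2^{(\nu-j)N'}$ for $\nu>j$ is absorbed into $\delta$ by choosing $N'$ strictly less than the available decay. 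Invoking the $t$-trick (Lemma \ref{lem:ttrick}) to bring the $t$-th power inside the $M^{u(\cdot)/t}_{p(\cdot)/t}(\ell_{q(\cdot)/t})$-norm, Theorem \ref{thm:MorreyHardy} applied to the resulting convolutions removes the $\eta_{\nu,N't}$ (this step requires $N't>n+nc_\infty(1/p,1/u,t)$, which is guaranteed by our choice of $M$ in \eqref{eq:M1}/\eqref{eq:M2}). Finally, Lemma \ref{lem:Hardy} absorbs the geometric sum $\sum_\nu 2^{-|j-\nu|\delta}$ and produces the desired estimate \eqref{eq:fnorm}.

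The main obstacle is the bookkeeping of parameters: we must simultaneously choose $t$, $N$, $N'$ and the decay exponent $\delta$ so that (i) $K>\alpha_2$ and the condition on $L$ give positive decay in $|j-\nu|$, (ii) $N'$ is large enough to apply Theorem \ref{thm:MorreyHardy} after the $t$-trick, and (iii) $N'$ is small enough that the bound $N\leq M-L-n$ from Lemma \ref{lem:muconv} holds. The two hypothesis regimes \eqref{eq:L1}/\eqref{eq:M1} versus \eqref{eq:L2}/\eqref{eq:M2} reflect exactly the two strategies available when $\min(p^-,q^-)\leq 1$: either take $t=\min\{1,p^-,q^-\}$ and pay with the $n\cpu$ term in the $\eta$-convolution requirement, or first apply the Sobolev embedding of Lemma \ref{lem:sobolev} (as in the third step of Theorem \ref{convergenceS'B}) to reduce to exponents above $1$ and use a sharper $c_\infty(1/p,1/u,t)$, at the cost of the extra factor $\max\{1,2c_{\log}(1/p)\}$ needed to control the modification of $\vek{w}$.
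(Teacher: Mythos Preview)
Your overall architecture is exactly the paper's: invoke Theorems \ref{convergenceS'}/\ref{convergenceS'B} and \ref{conv_convolution}/\ref{conv_convolutionB} for convergence and the pointwise identity, split into $\nu\le j$ and $\nu>j$, apply Lemma \ref{lem:muconv} and the weight properties, feed into Lemma \ref{lem:hnum}, then use Lemma \ref{lem:ttrick}, Theorem \ref{thm:MorreyHardy}, and Lemma \ref{lem:Hardy}. That is the correct route.

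Two points where your bookkeeping drifts from what actually works. First, you cannot take $t=\min\{1,p^-,q^-\}$ with equality: Theorem \ref{thm:MorreyHardy} requires $(p/t)^->1$ and $(q/t)^->1$, so $t$ must be \emph{strictly} smaller than $\min\{1,p^-,q^-\}$. The paper chooses $t\in(0,\min\{1,p^-,q^-\})$ so that $\alpha+n/t+n\cpu<\min\{M-L-n,\,L+n+\alpha_1+\alpha\}$ and then $N$ strictly between these; the strict inequalities in \eqref{eq:L1}--\eqref{eq:M2} are what make this possible. Second, and more importantly, you misread the role of the two hypothesis regimes. The split \eqref{eq:L1}/\eqref{eq:M1} versus \eqref{eq:L2}/\eqref{eq:M2} matters \emph{only} for the convergence statements (Theorems \ref{convergenceS'}/\ref{conv_convolution} versus \ref{convergenceS'B}/\ref{conv_convolutionB}); the norm estimate itself is carried out uniformly. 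The paper observes that either set of hypotheses implies the common conditions $L>-\alpha_1+\sigma_{p^-,q^-}+n\cpu$ and $M>L+2n+\alpha+\sigma_{p^-,q^-}+n\cpu$ (using that $c_\infty(1/p,1/u,\min\{1,p^-\})\ge\cpu$), and these alone drive the choice of $t$ and $N$ in the estimate. There is no second invocation of the Sobolev embedding Lemma \ref{lem:sobolev} in the norm estimate, and in particular your proposed range $t\in(\sup_x(1-p(x)/u(x)),p^-)$ for case two is neither needed nor sufficient (it does not guarantee $t<q^-$). Drop that detour and the proof goes through cleanly.
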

\begin{proof}
\underline{First step:} We start by observing that, in addition to the common hypotheses, and together with Remark \ref{smallen}(ii), the set of conditions \eqref{eq:L1}-\eqref{eq:M1} guarantees the application of Theorems \ref{convergenceS'} and \ref{conv_convolution} while the set of conditions \eqref{eq:L2}-\eqref{eq:M2} (together with the extra hypothesis $\sup_{x\in\Rn}\left(1-\frac{p(x)}{u(x)}\right)<p^-$) guarantees the application of Theorems \ref{convergenceS'B} and \ref{conv_convolutionB}. In particular, in both cases we have the convergence in $\SSn$ of the iterated sum in \eqref{eq:f=sum}. We still have to prove that $f$ belongs to $\Mufwpqx$ and that the estimate \eqref{eq:fnorm} holds.

Note that, given a system $(\varphi_j)_{j\in\N_0}$ constructed from any given admissible pair according to Definition \ref{dfn:admissiblePair}, we have, for each $j\in\N_0$ and for a.e. $x\in\Rn$, that
\begin{eqnarray*}
w_j(x)(\varphi_j^\vee\ast f)(x) & = & w_j(x)\left(\sum_{\nu=0}^\infty\sum_{m\in\Z^n}\lambda_\num(\varphi_j^\vee\ast\mu_\num)\right)(x) \\
& = & w_j(x)\sum_{\nu=0}^\infty\sum_{m\in\Z^n}\lambda_\num(\varphi_j^\vee\ast\mu_\num)(x),
\end{eqnarray*}
where the convergence of the sums in the first line is in the sense of $\SSn$ and in the last line is pointwise. The identification between the two being justified by Theorem  \ref{conv_convolution} or by Theorem \ref{conv_convolutionB}, depending on which set of special hypotheses \eqref{eq:L1}-\eqref{eq:M1} or \eqref{eq:L2}-\eqref{eq:M2} (together with $\sup_{x\in\Rn}\left(1-\frac{p(x)}{u(x)}\right)<p^-$) is considered. Whatever the case, we can thus write for each $j\in\N_0$ and for a.e. $x\in\Rn$ that
\begin{eqnarray}
|w_j(x)(\varphi_j^\vee\ast f)(x)| & \leq & \sum_{\nu=0}^j\sum_{m\in\Z^n}|\lambda_\num(\varphi_j^\vee\ast\mu_\num)(x)|w_j(x) \nonumber\\
&  &  + \sum_{\nu=j+1}^\infty\sum_{m\in\Z^n}|\lambda_\num(\varphi_j^\vee\ast\mu_\num)(x)|w_j(x). 
\label{eq:splitting}
\end{eqnarray}


Note also, whatever the special hypotheses chosen we always have 
$$L>-\alpha_1+\sigma_{p^-,q^-}+n\cpu$$
and
$$M>L+2n+\alpha+\sigma_{p^-,q^-}+n\cpu.$$
So, it is possible to choose $t\in (0,\min\{p^-,q^-,1\})$ such that 
$$\alpha+\frac{n}{t}+n\cpu < \min\{ M-L-n,L+n+\alpha_1+\alpha \}$$
and $N$ strictly in between these two numbers. In what follows we assume that $t$ and $N$ have been chosen in such a way.

\smallskip

\underline{Second step:} Here we estimate the two terms which appeared after the splitting of $|w_j(x)(\varphi_j^\vee\ast f)(x)|$ in \eqref{eq:splitting} above. For the first term, we obtain using Lemma \ref{lem:muconv} and the properties of the weight the estimates 
\begin{align}
&\sum_{\nu=0}^j\sum_{m\in\Z^n}|\lambda_\num(\varphi_j^\vee\ast\mu_\num)(x)|w_j(x)\notag\\
&\lesssim\sum_{\nu=0}^j\sum_{m\in\Z^n}2^{-(j-\nu)(K-\alpha_2)}|\lambda_\num|w_\nu(2^{-\nu}m)(1+2^\nu|x-2^{-\nu}m|)^{\alpha-M}\notag\\
&\lesssim\sum_{\nu=0}^j2^{-(j-\nu)(K-\alpha_2)}\left(\eta_{\nu,(M-\alpha)t}\ast\left|\sum_{m\in\Z^n}|\lambda_\num|w_\nu(2^{-\nu}m)\chi_\num\right|^t\right)^{1/t}(x),\label{eq:molecules1}
\end{align}
where we have also used Lemma \ref{lem:hnum} with $R=M-\alpha$. The same reasoning gives for the second term
\begin{align}
&\sum_{\nu=j+1}^\infty\sum_{m\in\Z^n}|\lambda_\num(\varphi_j^\vee\ast\mu_\num)(x)|w_j(x)\notag\\
&\lesssim\sum_{\nu=j+1}^\infty\sum_{m\in\Z^n}2^{-(\nu-j)(L+n+\alpha_1)}|\lambda_\num|w_\nu(2^{-\nu}m)(1+2^j|x-2^{-\nu}m|)^{\alpha-N}\notag\\
&\lesssim\sum_{\nu=j+1}^\infty2^{-(\nu-j)(L+n+\alpha_1-(N-\alpha))}\left(\eta_{\nu,(N-\alpha)t}\ast\left|\sum_{m\in\Z^n}|\lambda_\num|w_\nu(2^{-\nu}m)\chi_\num\right|^t\right)^{1/t}(x),\label{eq:molecules2}
\end{align}
where we have applied again  Lemma \ref{lem:hnum}, now with $R=N-\alpha$. Using the fact that $N-\alpha \leq M-\alpha$ and defining $\delta:=\min\{L+n+\alpha_1-(N-\alpha),K-\alpha_2\}$, we have out of \eqref{eq:molecules1}, \eqref{eq:molecules2} and \eqref{eq:splitting} that, for each $j\in\N_0$ and for a.e. $x\in\Rn$,
\begin{align*}
|w_j(x)(\varphi_j^\vee\ast f)(x)| \lesssim \sum_{\nu=0}^\infty2^{-|\nu-j|\delta}\left(\eta_{\nu,(N-\alpha)t}\ast\left|\sum_{m\in\Z^n}|\lambda_\num|w_\nu(2^{-\nu}m)\chi_\num\right|^t\right)^{1/t}(x).
\end{align*}

\smallskip

\underline{Third step:} Using the last estimate, Lemma \ref{lem:monotony}, the fact that our hypotheses and choices guarantee that $\delta>0$ and $(N-\alpha)t > n+nt\cpu$, Lemma \ref{lem:Hardy}, Lemma \ref{lem:ttrick} and Theorem \ref{thm:MorreyHardy} with $m=(N-\alpha)t$, we finally get
\begin{eqnarray*}
\lefteqn{\norm{f}{\Mufwpqx}\; = \; \norm{\left(|w_j(\varphi^\vee_j\ast f)|\right)_j}{\Muplq}} \\
& \lesssim & \norm{\left(\left(\eta_{\nu,(N-\alpha)t}\ast\left|\sum_{m\in\Z^n}|\lambda_\num|w_\nu(2^{-\nu}m)\chi_\num\right|^t\right)^{1/t}\right)_\nu}{\Muplq}\\
& = & \norm{\left(\eta_{\nu,(N-\alpha)t}\ast\left|\sum_{m\in\Z^n}|\lambda_\num|w_\nu(2^{-\nu}m)\chi_\num\right|^t\right)_\nu}{M_{\p/t}^{\u/t}(\ell_{\q/t})}^{1/t} \\
& \lesssim & \norm{\left(\left|\sum_{m\in\Z^n}|\lambda_\num|w_\nu(2^{-\nu}m)\chi_\num\right|^t\right)_\nu}{M_{\p/t}^{\u/t}(\ell_{\q/t})}^{1/t}\\
& = & \norm{\left(\sum_{m\in\Z^n}|\lambda_\num|w_\nu(2^{-\nu}m)\chi_\num\right)_\nu}{\Muplq}\\
& = & \norm{\left(\sum_{\nu=0}^\infty \sum_{m\in\Z^n}|\lambda_\num w_\nu(2^{-\nu}m)\chi_\num(\cdot)|^\q\right)^{1/\q}}{\Mup} \; = \; \norm{\lambda}{\mufwpqx}.
\end{eqnarray*}
\end{proof}
Using Theorem \ref{thm:416} on the molecular decomposition we also obtain a general embedding on this scale of variable exponent function spaces.
\begin{cor}\label{cor:SinE}
Let $\vek{w}\in\mgk$ be admissible weights and $p,q\in\Plog$, $u\in\mathcal{P}(\Rn)$ with $0<p^-\leq p(x)\leq u(x)\leq \sup u < \infty$ and  $q^-,q^+\in(0,\infty)$. Then it holds
\begin{align*}
\Sn\hookrightarrow\Mufwpqx.
\end{align*}
\end{cor}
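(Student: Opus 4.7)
The plan is to realize any given Schwartz function as a particularly trivial molecular sum, and then read off the embedding directly from Theorem \ref{thm:416}. Given $\phi\in\Sn$, I will not attempt to decompose it in a fine way; instead I will treat (a suitable rescaling of) $\phi$ itself as a single $[K,L,M]$-molecule concentrated near the unit cube $Q_{0,0}$, with all other molecules taken to be zero. Since for $\nu=0$ the moment conditions in Definition \ref{def:molecules} are vacuous, and since a Schwartz function has rapid decay together with all its derivatives, the molecule conditions reduce to an estimate controlled by a single Schwartz semi-norm $\mathfrak{p}_N(\phi)$.

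Concretely, I would fix $K,L,M$ large enough to satisfy all standing assumptions of Theorem \ref{thm:416} (say, the first set of hypotheses \eqref{eq:L1}--\eqref{eq:M1}, with $K>\alpha_2$), set $N:=\max(K,M)$, define
$$\lambda_{0,0}:=\mathfrak{p}_N(\phi), \qquad \mu_{0,0}:=\lambda_{0,0}^{-1}\phi,$$
and $\lambda_{\nu,m}:=0$, $\mu_{\nu,m}:=0$ in all other cases (ignoring the trivial case $\phi\equiv 0$). Because $(1+|x|)^M|D^\beta\phi(x)|\leq \mathfrak{p}_N(\phi)$ for every $|\beta|\leq K$, the function $\mu_{0,0}$ satisfies the molecular decay bound with center $2^{-0}\cdot 0=0$, and the remaining (zero) molecules are trivially $[K,L,M]$-molecules. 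The moment conditions for $\nu\geq 1$ are automatic since those molecules vanish identically.

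Next, the coefficient sequence has quasi-norm
$$\norm{\lambda}{\mufwpqx}=w_0(0)\,\mathfrak{p}_N(\phi)\,\norm{\chi_{0,0}}{\Mup},$$
and $\norm{\chi_{0,0}}{\Mup}<\infty$ by Lemma \ref{lem:Anorm} (applied to the bounded measurable set $Q_{0,0}$, using $p\in\Plog$). Therefore $\lambda\in\mufwpqx$, with quasi-norm bounded by a constant (depending only on $p,u,\vek{w}$) times $\mathfrak{p}_N(\phi)$. Applying Theorem \ref{thm:416}, the single-term sum $\sum_{\nu,m}\lambda_{\nu,m}\mu_{\nu,m}=\phi$ defines an element of $\Mufwpqx$ with
$$\norm{\phi}{\Mufwpqx}\leq c\norm{\lambda}{\mufwpqx}\leq c'\,\mathfrak{p}_N(\phi),$$
which is precisely the continuous embedding claimed.

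There is no real obstacle here: the one thing to be slightly careful about is matching parameters so that Theorem \ref{thm:416} actually applies with the same $K,L,M$ used to verify the molecule conditions on $\mu_{0,0}$, but since $\phi\in\Sn$ we may take $K,L,M$ arbitrarily large (at the cost of possibly enlarging the semi-norm order $N$), so the hypotheses $K>\alpha_2$, \eqref{eq:L1} and \eqref{eq:M1} are easily met. The argument also illustrates a feature of the molecular machinery: the embedding $\Sn\hookrightarrow\Mufwpqx$ follows with essentially no extra work once the molecular synthesis theorem is available, bypassing the classical Fourier-analytic approach used in \cite{Tri83}.
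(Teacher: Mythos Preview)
Your argument is correct and follows essentially the same approach as the paper's own proof: treat $\phi/\mathfrak{p}_N(\phi)$ as a single $[K,L,M]$-molecule concentrated near $Q_{0,0}$ (with vacuous moment conditions at $\nu=0$), take the coefficient sequence supported only at $(0,0)$, invoke Lemma~\ref{lem:Anorm} for $\norm{\chi_{0,0}}{\Mup}<\infty$, and apply Theorem~\ref{thm:416}. The only cosmetic point is that since the semi-norms $\mathfrak{p}_N$ are indexed by $N\in\N$, you should take $N=\max(K,\lceil M\rceil)$ (or simply choose $M\in\N$ from the outset), exactly as the paper does.
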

\begin{proof}
We have to show that there exist $c>0$ and $N\in\N$ such that
\begin{align*}
\norm{\phi}{\Mufwpqx}\leq c\, \mathfrak{p}_N(\phi)\quad\text{for all }\phi\in\Sn.
\end{align*}
Since for $\phi=0$ there is nothing to prove, consider $0 \not= \phi\in\Sn$. First, observe that given any $K,L\in\N_0$ and $M>0$ we have that $\phi/\mathfrak{p}_{\max(K,\lceil M\rceil)}(\phi)$ is a $[K,L,M]$-molecule concentrated near $Q_{0,0}$. On one hand moment conditions are not required, since $\nu=0$, on the other hand we have for any $0\leq|\beta|\leq K$
\begin{align*}
\left|D^\beta\frac{\phi(x)}{\mathfrak{p}_{\max(K,\lceil M\rceil)}(\phi)}\right|
&=\frac{(1+|x|)^M|D^\beta\phi(x)|}{\mathfrak{p}_{\max(K,\lceil M\rceil)}(\phi)}(1+|x|)^{-M}\\
&\leq 2^{|\beta|0}(1+2^0|x-2^{-0}0|)^{-M}.
\end{align*} 
Observe also that for $\lambda(\phi)$ with $\lambda_{0,0}(\phi):=\mathfrak{p}_{\max(K,\lceil M\rceil)}(\phi)$ and $\lambda_{\nu,m}(\phi):=0$ for $\nu\neq0$ or $m\neq0$ we have that
\begin{align}
\norm{\lambda(\phi)}{\mufwpqx}&=\norm{w_0(0)\mathfrak{p}_{\max(K,\lceil M\rceil)}(\phi)\chi_{0,0}}{\Mup}\notag\\
&=w_0(0)\mathfrak{p}_{\max(K,\lceil M\rceil)}(\phi)\norm{\chi_{0,0}}{\Mup}<\infty\label{eq:last}
\end{align}
by using Lemma \ref{lem:Anorm}. Now fix $K,L,M$ according to Theorem \ref{thm:416} for example with the restrictions \eqref{eq:L1} and \eqref{eq:M1}. Applying this theorem to $\lambda(\phi)$, which has only one non-zero entry, and the molecules above we get 
\begin{align*}
\phi=\lambda_{0,0}(\phi)\frac{\phi}{\mathfrak{p}_{\max(K,\lceil M\rceil)}(\phi)}&\in\Mufwpqx
\intertext{and}
\norm{\phi}{\Mufwpqx}&\leq c_1\norm{\lambda(\phi)}{\mufwpqx},
\intertext{which together with \eqref{eq:last} gives}
\norm{\phi}{\Mufwpqx}&\leq c\, \mathfrak{p}_N(\phi),
\end{align*}
where $c:=c_1w_0(0)\norm{\chi_{0,0}}{\Mup}$ and $N:=\max(K,\lceil M\rceil)$ are independent of $\phi$.
\end{proof}

\end{document}